\newtheorem{theorem}{Theorem}[section]
\newtheorem{lemma}[theorem]{Lemma}
\newtheorem{proposition}[theorem]{Proposition}
\newtheorem{corollary}[theorem]{Corollary}
\begin{document}

\title[Almost everywhere convergence of ergodic series]{Almost everywhere convergence of ergodic series}

\date{}

\author
{Aihua Fan}
\email{ai-hua.fan@u-picardie.fr}

\address
{LAMFA, UMR 7352 CNRS, University of Picardie,
33 rue Saint Leu, 80039 Amiens, France}

\begin{abstract} We consider ergodic series of the form
$\sum_{n=0}^\infty a_n f(T^n x)$ where  $f$ is an integrable function with zero mean value with respect to
a $T$-invariant measure $\mu$. Under certain conditions on the dynamical system
$T$, the invariant measure $\mu$ and the function $f$, we prove that
the series converges $\mu$-almost everywhere if and only if $\sum_{n=0}^\infty |a_n|^2<\infty$, and that in this case
the sum of the convergent series is exponentially integrable and satisfies a Khintchine type inequality.  We also prove that the system $\{f\circ T^n\}$ is a Riesz system if and only if the spectral
measure of $f$ is absolutely continuous with respect to the Lebesgue measure and the Radon-Nikodym derivative is bounded from above as well as from below by a constant. We check the conditions  for Gibbs measures $\mu$
relative to hyperbolic dynamics $T$ and for H\"{o}lder functions $f$.
An application is given to the study of differentiability of the Weierstrass type functions $\sum_{n=0}^\infty a_n f(3^n x)$.

\end{abstract}

\maketitle

\section{Introduction } Let $(\Omega, \mathcal{A}, \mu, T)$ be a measure-preserving dynamical
system. By an ergodic series we mean a series of the form
\begin{equation}\label{ES}
   \sum_{n=0}^\infty a_n f_n(T^n x).
\end{equation}
We assume that $f_n$ are integrable and $\mathbb{E} f_n=0$, and that $(a_n)$ a sequence of numbers.
We are interested in the almost everywhere (a.e.) convergence of the series.
We say that $\{f_n \circ T^n\}$ is a {\em convergence system} if
$$
   \sum_{n=0}^\infty |a_n|^2<\infty \Rightarrow \sum_{n=0}^\infty a_n f_n(T^nx)
   \ \mbox{converges} \ a.e.
$$
If, furthermore, the inverse implication is also true, we say that $\{f_n \circ T^n\}$ is an
{\em exact convergence system}.
In this paper, we try 
to find convergence systems and exact convergence systems of the form $\{f_n \circ T^n\}$.

The following negative result is due to
Kakutani and Petersen \cite{KP1981} (see also \cite{PetersenBook}, p.94 and p.99).
Suppose that $(a_n)$ is a decreasing sequence of positive numbers such that  $n a_n = O(1)$ and
$\sum_{n=0}^\infty a_n = \infty$. For any non-atomic  $T$-ergodic measure $\mu$, there is a function $f\in L^\infty(\mu)$ with
$\mathbb{E} f = 0$ such that the series $\sum_{n=0}^\infty a_n f( T^n x)$
diverges a.e.  This shows that in general
the condition $\sum_{n=0}^\infty |a_n|^2<\infty$ is not sufficient for the a.e. convergence of the ergodic series
(\ref{ES}) and we need
additional conditions on the system $(\Omega, \mathcal{A}, \mu, T)$  or on the functions $f_n$ or on both.

   In many cases the sequence $\{f_n\circ T^n\}$ has rapid decay of correlation, which implies
   that it is a quasi-orthogonal system. But even if $\{f_n\circ T^n\}$ is orthogonal,
   the condition $\sum_{n=0}^\infty |a_n|^2<\infty$
    is  not sufficient either. Actually, the situation for orthogonal and quasi-orthogonal series is well understood  through the following two results. The first one is due to Rademacher-Menshov (see \cite{Alexits}, p. 80 for orthogonal series), which is generalized by Kac-Salem-Zygmund \cite{KSZ} to quasi orthogonal series. The second one is due  to Tantori (see \cite{Alexits}, p. 88). If $$
         \sum_{n=0}^\infty a_n^2 \log^2 n <\infty,
   $$
   any quasi orthogonal series $\sum_{n=0}^\infty a_n X_n$ converges a.e. If
   $$
         a_n \downarrow 0, \quad \sum_{n=0}^\infty a_n^2 \log^2 n =\infty,
   $$
   then there exists an orthonormal system of functions $\{\Phi_n\}$ on the interval
   $[0, 1]$ depending on $\{a_n\}$ such that the series  $\sum_{n=0}^\infty a_n \Phi_n(x)$
   diverges a.e. with respect to  the Lebesgue measure.
   For other references on orthogonal or quasi-orthogonal series, see \cite{Alexits,  Weber}.

   Paszkiewicz \cite{P2010} gave a complete characterization of sequences $\{a_n\}$
for which $\sum a_n \Phi_n$ converges a.e. for {\em any} orthonormal sequence $\{\Phi_n\}$ in {\em any} $L^2$-space. When we {\em fix}  a  sequence $\{\Phi_n\}$, it is a different problem to find conditions on $\{a_n\}$
 for  $\sum a_n \Phi_n$ converges a.e.  The famous Lusin problem belongs to this category of problems and it
 treats the case $\Phi_n(x) = e^{i n x}$ for which Carleson \cite{Carleson} proved that  $\sum |a_n|^2<\infty$ is a sufficient condition
 for $\sum a_n e^{in x}$ converges a.e. with respect to the Lebesgue measure. In our paper,
 we will fix up some 'good' sequence $\Phi_n = f \circ T^n$ in the setting of measure-preserving dynamical systems so that $\sum a_n f\circ T^n$ converges a.e. whence $\sum |a_n|^2 <\infty$.

   To state our result, we need the notion of Riesz system.
   We say that $\{f_n \circ T^n\}$ is a {\em Riesz system} if the inequalities
   $$
      C^{-1} \sum |a_n|^2 \le \left\|\sum a_n f_n(T^n x) \right\|_{L^2(\mu)}^2
         \le C \sum |a_n|^2$$
   hold for some constant $C >1$ and for all finite sequences $(a_n)$. Recall
   that the inequality at the right hand side  means that $\{f_n \circ T^n\}$ is {\em quasi orthogonal} (see \cite{KSZ}). In this paper, we will prove the following theorem.

   \begin{theorem} \label{main} Assume that $(X, \mathcal{B}, T, \mu)$ is an ergodic measure-preserving dynamical system.
      Let $f \in L^1(\mu)$. Suppose
      \\
      \ \indent {\rm (H1)}\ \ $
      \lim_{n\to \infty} \mathbb{E} (f|T^{-n}\mathcal{B})=0$; \\
      \ \indent {\rm (H2)} \ $
          \sum_{i=0}^\infty \|\mathbb{E}(f|T^{-i}\mathcal{B}) - \mathbb{E}(f|T^{-(i+1)}\mathcal{B})\|_{L^\infty(\mu)} <\infty
      $.\\
      Then for any complex sequence $(a_n)\subset \mathbb{C}$ such that $\sum_{n=0}^\infty |a_n|^2<\infty$,
      the ergodic series $\sum_{n=0}^\infty a_n f(T^n x)$ converges a.e. In this case,
      we have the following Khithchine inequality
      \begin{equation}\label{Khintchine}
         \left\| \sum_{n=0}^\infty a_n f(T^n x) \right\|_{L^p(\mu)}
         \le C(p, f) \sqrt{\sum_{n=0}^\infty |a_n|^2}, \quad (p\ge 1)
      \end{equation}
      where $C(p, f)>0$ is a constant  independent of $(a_n)$. If furthermore $\{f \circ T^n\}$ is a Riesz system,
       then $\sum_{n=0}^\infty |a_n|^2=\infty$ implies the almost everywhere
         divergence of the
      series $\sum_{n=0}^\infty a_n f(T^n x)$.
    \end{theorem}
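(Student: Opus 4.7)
The plan is to decompose $f$ with respect to the reverse filtration $(T^{-n}\mathcal{B})_{n\ge 0}$ into a uniformly convergent sum of bounded reverse-martingale differences, handle each component by the reverse martingale convergence theorem together with Doob's maximal inequality, and then establish the Riesz converse via Paley--Zygmund. Set
$$d_i:=\mathbb{E}(f\,|\,T^{-i}\mathcal{B})-\mathbb{E}(f\,|\,T^{-(i+1)}\mathcal{B}),\qquad i\ge 0.$$
Hypothesis (H1), combined with reverse martingale convergence, gives $f=\sum_{i\ge 0}d_i$ a.e., and (H2) makes this sum absolutely convergent in $L^\infty(\mu)$. Since $d_i$ is $T^{-i}\mathcal{B}$-measurable with $\mathbb{E}(d_i\,|\,T^{-(i+1)}\mathcal{B})=0$, one can write $d_i=h_i\circ T^i$ where $\|h_i\|_\infty=\|d_i\|_\infty$ and $\mathbb{E}(h_i\,|\,T^{-1}\mathcal{B})=0$. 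Then
$$\sum_{n=0}^N a_nf(T^nx)=\sum_{i=0}^\infty S_N^{(i)}(x),\quad S_N^{(i)}(x):=\sum_{n=0}^N a_n h_i(T^{n+i}x).$$

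For each fixed $i$, the sequence $(h_i\circ T^{n+i})_n$ consists of uniformly bounded reverse-martingale differences with respect to the decreasing filtration $\mathcal{F}_n^{(i)}:=T^{-(n+i)}\mathcal{B}$. The tail $R_N^{(i)}:=\sum_{n\ge N}a_n h_i\circ T^{n+i}$ therefore defines a reverse $L^2$-martingale; because $\mathbb{E}(h_i\,|\,T^{-1}\mathcal{B})=0$ forces $\mathbb{E}(R_0^{(i)}\,|\,T^{-\infty}\mathcal{B})=0$, reverse martingale convergence gives $R_N^{(i)}\to 0$ a.e.\ and in $L^2$, whence $S_N^{(i)}=R_0^{(i)}-R_{N+1}^{(i)}$ converges a.e., and Doob's $L^2$-maximal inequality yields
$$\Bigl\|\sup_N|S_N^{(i)}|\Bigr\|_{L^2(\mu)}\le 3\|h_i\|_\infty\sqrt{\textstyle\sum_n|a_n|^2}.$$
Summing over $i$ (legitimate by (H2)) and applying dominated convergence in the index $i$ deliver both the $\mu$-a.e.\ convergence of the ergodic series and the $L^2$ Khintchine bound. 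For $p>2$, I apply Burkholder's martingale inequality, or equivalently Azuma--Hoeffding since each $h_i\circ T^{n+i}$ is a bounded martingale difference, to each $R_0^{(i)}$ and sum the resulting $L^p$-bounds by Minkowski.

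For the converse under the Riesz hypothesis, assume $\sum|a_n|^2=\infty$ and set $V_{M,N}:=\sum_{M<n\le N}|a_n|^2$. The Riesz lower bound yields $\|S_N-S_M\|_{L^2}^2\ge C^{-1}V_{M,N}$, while the Khintchine $L^4$-bound applied to the truncated coefficient sequence gives $\|S_N-S_M\|_{L^4}\le C(4,f)V_{M,N}^{1/2}$. Paley--Zygmund then produces
$$\mu\bigl\{|S_N-S_M|\ge\tfrac12 C^{-1/2}V_{M,N}^{1/2}\bigr\}\ge c>0$$
uniformly in $M<N$. Fixing $M$ and letting $N\to\infty$ forces $V_{M,N}\to\infty$, so $|S_N-S_M|$ does not tend to $0$ in probability, contradicting the Cauchy-in-probability property of any a.e.\ convergent sequence; the series therefore diverges on a set of positive measure, which ergodicity of $T$ is then invoked to upgrade to $\mu$-a.e.\ divergence. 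The main obstacle I expect is precisely this final zero-one-law step, since the convergence set is tail-measurable in $\bigcap_nT^{-n}\mathcal{B}$, a $\sigma$-algebra not automatically trivial under ergodicity; one must separately verify $T$-invariance of the convergence set modulo $\mu$-null sets, for instance by applying the direct part to the sequence $(a_n-a_{n-1})$ whenever it belongs to $\ell^2$, and using an auxiliary device otherwise.
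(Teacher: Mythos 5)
Your proof of the convergence half and of the Khintchine inequality is correct, but it takes a genuinely different route from the paper. You regroup $\sum_n a_n f\circ T^n$ into layers $S_N^{(i)}=\sum_n a_n h_i\circ T^{n+i}$, each of which is a sum of bounded martingale differences for the decreasing filtration $T^{-(n+i)}\mathcal{B}$, and you treat each layer by reverse-martingale convergence plus Doob's $L^2$-maximal inequality before summing over $i$ using (H2); this is clean, gives an $L^2$ maximal inequality directly, and uses only textbook martingale theory. The paper instead keeps the partial sum $S_n=\sum_{k\le n}a_kf\circ T^k$ intact, computes $\|d_i(S_n)\|_\infty\le\sum_{k\le i\wedge n}|a_k|\Delta(i-k)$, bounds $\sum_i\|d_i(S_n)\|_\infty^2$ by Cauchy--Schwarz, and applies an Azuma-type inequality (Lemma \ref{Azuma}) to get subgaussian concentration of $S_n$ --- hence the Khintchine inequality for all $p\ge 1$ at once, with exponential integrability as a bonus --- and then derives a.e.\ convergence from the Erd\H{o}s--Steckin--Gaposhkin maximal inequality (Lemma \ref{MaxIneq}) with some $\beta>2$. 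Your route buys economy of means at the $L^2$ level; the paper's buys the stronger subgaussian estimate and extends verbatim to varying functions $f_n$ (Theorem \ref{main2}). Your $p>2$ step (Azuma per layer, then Minkowski over $i$) is fine and essentially reconstructs the paper's estimate.

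For the divergence half you follow the same Paley--Zygmund strategy as the paper (applied to $S_N-S_M$ rather than to $S_N$, an immaterial difference), and you correctly obtain divergence on a set of measure bounded below by a constant. The obstacle you flag at the end is exactly where the paper is most terse: the paper asserts in one line that the event $\{\sup_n|S_n|=+\infty\}$ is $T$-invariant and concludes by ergodicity. Your worry is legitimate: since $S_n(Tx)$ is the partial sum with coefficient sequence shifted by one, the sets $\{\sup_n|S_n|=+\infty\}$ and its preimage differ by the behaviour of $\sum_k(a_{k-1}-a_k)f(T^kx)$, which is controlled by the direct half only when $(a_k-a_{k-1})\in\ell^2$; the event does lie in the tail field $\bigcap_N T^{-N}\mathcal{B}$, but that field need not be trivial under ergodicity alone. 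Your proposed repair covers only the case $(a_k-a_{k-1})\in\ell^2$, and the ``auxiliary device otherwise'' is not supplied, so this final upgrade from positive-measure to almost-everywhere divergence remains open in your write-up --- though you should be aware that the paper's own treatment of this step is no more than the bare invariance claim you are questioning.
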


   The condition (H1) is satisfied by all $f\in L^1(\mu)$ with $\mathbb{E} f =0$ when $\mu$
   is not only ergodic but also exact (see \cite{Walters}, p. 115 for the exactness).  As we shall see,
   both conditions (H1) and (H2) are satisfied when $\mu$
   is a Gibbs measure and $f$ is rather regular for many hyperbolic dynamical systems (Section 4).

   We have mentioned the negative result of Kakutani-Petersen concerning the almost everywhere convergence of ergodic series. In particular, the series $\sum_{k=1}^\infty \frac{f(T^k x)}{k}$ diverges almost everywhere for some $f\in L^\infty$. However, Cotlar \cite{ Cotlar} had proved a positive result concerning the ergodic Hilbert
   transform:
       $$
            H_n f(x) : = {\sum_{k=-n}^n}' \frac{f(T^k x)}{k} = \sum_{k=1}^n \frac{f(T^k x) - f(T^{-k} x)}{k}
       $$
   where $T$ is an invertible measure-preserving transform. Cotlar's theorem states that the limit $\lim_{n\to \infty}
   H_nf(x)$ does exist almost everywhere for every integrable function $f$. These negative and positive results show that the almost
   everywhere convergence of the ergodic series (\ref{ES}) is a subtle problem.
    We point out that our result is actually proved for $\sum_{n=0} a_n f_n(T^n x)$ for a sequence of functions
    $\{f_n\}$ (Theorem \ref{main2}).

 In \cite{Fan_Riesz} and \cite{Peyriere}, the authors studied the almost everywhere convergence with respect to Riesz products $\prod_{n=1}^\infty (1 + {\rm Re} a_n e^{i\lambda_n x})$ of lacunary trigonometric series
 $\sum_{n=1}^\infty c_n (e^{i\lambda_n x} - \overline{a}_n/2)$. It is found that $\sum_{n=1}^\infty |c_n|^2<\infty$ is a necessary and sufficient condition. The method used in \cite{Fan_Riesz} works
 for all compact abelian groups.

     Consider $X_n=f(T^n x)$ as random variables. The study of the random series (\ref{ES}) is tightly related to the weighted SLLN (Strong Law of Large Numbers)
\begin{equation}\label{SLLN}
    \lim_{n\to \infty} \frac{\sum_{k=0}^nw_k X_k}{\sum_{k=0}^n w_n} = 0 \quad a.s.
\end{equation}
where $(w_n)$ is a sequence of positive numbers such that $\lim_{n\to\infty}\sum_{k=0}^n w_n =+\infty$.
By the Kronecker lemma (\cite{Shiryayev}, p.390), the almost sure convergence of (\ref{ES}) implies that of (\ref{SLLN}), when $a_n =w_n/W_n$ with
$W_n = w_0+w_1+\cdots +w_n$.
Concerning i.i.d. sequences $(X_n)$, a pioneering work on the weighted SLLN is due to Jamison, Orey and Pruitt \cite{JOP}. There
were many subsequent works done in this direction for independent variables, but few for dependent variables
\cite{CL}.
In the dynamical case where $X_n=f \circ T^n$, the study of the series (\ref{ES})
may be considered as a study on the rate of convergence in the Birkhoff ergodic theorem. As shown in Krengel \cite{Krengel} where a short section
(pp. 14-15) is devoted to the discussion on the subject, no general estimate holds for
the ergodic sum $\sum_{k=0}^n f\circ T^k(x)$. As pointed out by Kachurovskii \cite{K} (p. 654),
a systematic study is then to find parameters on which certain characteristics of the rate of convergence depend. Theorem \ref{main} provides us a rather general parameter,  which is composed of (H1) and (H2), ensuring that almost everywhere
$$
   \sum_{k=0}^n f\circ T^k(x) = O\left(\sqrt{n\log_1 n \log_2 n \cdots \log_{m-1} n \log_m^{1+\epsilon}n}\right)
$$
as $n\to \infty$,
where $\log_m n$ is the iterated logarithmic function, inductively defined by
$\log_1 n = \log n$ and  $\log_m n = \log \log_{m-1} n$ ($n$ being large enough).

    In Theorem \ref{main} we need the hypothesis that $\{f \circ T^n\}$ is a Riesz system.
    Let $f \in L^2([0, 1])$ which defines an odd function on $[-1, 1]$ and is then extended to a $2$-periodic
    function on $\mathbb{R}$. Hedenmalm, Lindqvist and Seip \cite{HLS} characterized those functions $\varphi$
    such that the dilated functions $\{\varphi(nx)\}_{n\ge 1}$ is a Riesz basis of $L^2([0, 1])$.
    Such a function $\varphi$ provides us a Riesz system $\{\varphi(3^n x)\}_{n\ge 0}$ in $L^2([0, 1])$.
    In general,
    whether $\{f_n \circ T^n\}$ is a Riesz system is a delicate matter and it will be worthy of study.

    For the system $\{f\circ T^n\}$ to be a Riesz system in $L^2(\mu)$, we get the following criterion.
    For any measure $\sigma$ on $[-\pi, \pi]$, we define
    $$
    \sigma^* = \frac{\sigma + \check{\sigma}}{2}
    $$
    where $\check{\sigma}(A)=\sigma(-A)$ for all Borel sets $A$ in $[-\pi, \pi]$.
    In the following theorem, $\lambda$ refers to the Lebesgue measure on $[-\pi, \pi]$.

\begin{theorem}\label{ThmRiesz} Assume that $(X, \mathcal{B}, T, \mu)$ is a  measure-preserving dynamical system.
      Let $ f\in L^2(\mu)$ with spectral measure $\sigma_f$. \\
      \indent \mbox{\rm (i)} \ \ We have $\sigma_f\ll \lambda$ and $\frac{d\sigma_f}{d\lambda}\le A^2$ for some constant $A>0$ iff the inequality
      \begin{equation}\label{QO1}
            \left\|\sum a_n f\circ T^n\right\|_2 \le A \sqrt{\sum |a_n|^2}
      \end{equation}
      holds for all finite complex sequences $(a_n)$.\\
       \indent \mbox{\rm (ii)} \ We have $\lambda \ll \sigma_f$ and $\frac{d\lambda}{d\sigma_f}\le B^{-2} $
        for some constant $B>0$ iff the inequality
      \begin{equation}\label{QO-1}
            \left\|\sum a_n f\circ T^n\right\|_2 \ge B \sqrt{\sum |a_n|^2}
      \end{equation}
      holds for all finite complex sequences $(a_n)$.
      \\
      \indent \mbox{\rm (iii)} \ \ Assume that we consider the inequalities (\ref{QO1}) and (\ref{QO-1})
with {\rm real} sequences $(a_n)$.   Then we have the same results (i) and (ii)  but with $\sigma_f$ replaced by $\sigma_f^*$.
\end{theorem}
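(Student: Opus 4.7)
The plan is to translate both inequalities into comparisons of measures via the spectral theorem. For a finite sequence $(a_n)$ set $P(t)=\sum_n a_n e^{int}$. Expanding the square norm and using the definition of the spectral measure,
$$
\Bigl\|\sum_n a_n f\circ T^n\Bigr\|_{L^2(\mu)}^2
 \;=\; \sum_{n,m} a_n\overline{a_m}\,\langle f\circ T^n,f\circ T^m\rangle
 \;=\; \int_{-\pi}^{\pi} |P(t)|^2\,d\sigma_f(t),
$$
while Parseval gives $\sum_n |a_n|^2 = \int_{-\pi}^{\pi}|P(t)|^2\,d\lambda(t)$ for the suitably normalized $\lambda$. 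Thus (\ref{QO1}) asserts that $\int |P|^2\,d\sigma_f \le A^2\int|P|^2\,d\lambda$ for every trigonometric polynomial $P$, and (\ref{QO-1}) is the reverse inequality with constant $B^2$.

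The ``only if'' directions in (i) and (ii) are immediate from the density bound. For ``if'', I invoke the Fej\'er--Riesz theorem: every non-negative trigonometric polynomial $Q$ on $[-\pi,\pi]$ can be written as $|P|^2$ for some trigonometric polynomial $P$. A Fej\'er kernel argument then shows that non-negative trigonometric polynomials are uniformly dense in the non-negative continuous periodic functions (approximate $g+\varepsilon$ by a trigonometric polynomial within $\varepsilon/2$ and let $\varepsilon\downarrow 0$). Combined with outer regularity of Borel measures, the inequality $\int Q\,d\sigma_f\le A^2\int Q\,d\lambda$ for all non-negative trigonometric $Q$ extends to $\sigma_f(E)\le A^2\lambda(E)$ for every Borel set $E$, which is (i). Part (ii) follows by the mirror argument, giving $\lambda\le B^{-2}\sigma_f$.

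For (iii), when $(a_n)$ is real one has $\overline{P(t)}=P(-t)$, so $|P(t)|^2$ is an \emph{even} non-negative trigonometric polynomial. Evenness then gives $\int|P|^2\,d\sigma_f=\int|P|^2\,d\sigma_f^{*}$, so the restricted inequalities now compare $\sigma_f^{*}$ with $\lambda$. Since both of these measures are invariant under $t\mapsto -t$, for any continuous $g$ the signed measure $\eta=A^2\lambda-\sigma_f^{*}$ (resp.\ $\sigma_f^{*}-B^2\lambda$) satisfies $\int g\,d\eta=\int g_e\,d\eta$ with $g_e(t)=(g(t)+g(-t))/2$; even non-negative continuous functions are uniformly approximated by even non-negative trigonometric polynomials (symmetrize the approximations produced above), so the same density argument forces $\eta\ge 0$, yielding the desired bound on $d\sigma_f^{*}/d\lambda$ or $d\lambda/d\sigma_f^{*}$. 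The only thing that requires genuine bookkeeping is the normalization of $\lambda$ consistent with the convention used for $\sigma_f$; beyond that, the Fej\'er--Riesz/approximation step is the only substantive ingredient, and it is standard.
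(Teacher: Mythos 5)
Your argument is essentially the paper's own proof: both reduce the inequalities, via the spectral lemma, to $\int_{-\pi}^{\pi}|P(e^{it})|^2\,d\sigma_f(t)\le A^2\int_{-\pi}^{\pi}|P(e^{it})|^2\,d\lambda(t)$ (resp.\ the reverse with $B^2$) over all polynomials $P$, and both upgrade this to a comparison of measures by combining the Fej\'er kernel with the Fej\'er--Riesz factorization and then testing against arbitrary non-negative continuous functions. Parts (i) and (ii) are fine (modulo the normalization of $\lambda$ that you already flag).

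The one place you are too quick is part (iii). Restricting to real sequences $(a_n)$ means that the test functions supplied by the hypothesis are exactly $|P(e^{it})|^2$ with $P$ having \emph{real} coefficients, so to run the density argument you must show that every even non-negative continuous function is a uniform limit of such squares (or of finite sums of them). ``Symmetrizing the approximations produced above'' does not deliver this: if $\phi_n=|Q_n|^2$ with $Q_n$ complex, then $\tfrac12\bigl(|Q_n(e^{it})|^2+|Q_n(e^{-it})|^2\bigr)$ is even, but it is a sum of squares of moduli of complex-coefficient polynomials, and splitting $Q_n$ into real and imaginary parts does not turn $|Q_n|^2$ into a sum of real-coefficient squares either (the cross term $Q_n(e^{it})\overline{Q_n(e^{it})}$ does not decouple on the circle). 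The missing ingredient is the real form of the Fej\'er--Riesz lemma: an even non-negative trigonometric polynomial, i.e.\ one of the form $P(\cos t)$ with $P\ge 0$ on $[-1,1]$, can be written as $|Q(e^{it})|^2$ with $Q\in\mathbb{R}[X]$; this is exactly what the paper invokes (P\'olya--Szeg\H{o}, Chapter VI, Exercise 41). With that citation inserted your proof of (iii) closes; the final passage from even test functions to arbitrary non-negative ones, by discarding the odd part against the symmetric measures $\sigma_f^*$ and $\lambda$, you already have.
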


  The proof of Theorem \ref{main} is based on a maximal inequality due to
  Erd\"os-Steckin-Gaposkin (Lemma \ref{MaxIneq}), and on the Khintchine inequality (\ref{Khintchine})
  with $p=4$ which is proved by studying the concentration of the series
  (Lemma \ref{Azuma}).

  As an application,
  Theorem \ref{main}  will be applied to study the
  differentiability of the Weierstrass type functions
  \begin{equation} \label{F1}
      F(x) = \sum_{n=0}^\infty a_n 3^{-n} f(3^n x) \qquad (x\in \mathbb{T})
\end{equation}
  where $f \in C^{1+\delta}(\mathbb{T})$ with some $\delta >0$ such that 
  $\{f'(3^n x)\}$ is a Riesz system,  and $(a_n)$ is a sequence of numbers
such that
 \begin{equation}\label{Cond-a}
\lim_{n\to\infty} a_n =0,\quad
\sum_{n=0}^\infty a_n \ {\rm diverges}, \quad
\sum_{n=0}^\infty |a_n -a_{n+1}|<\infty.
\end{equation}
The following phase transition is proved (Theorem \ref{Thm-diff}).

\begin{theorem} \label{Thm-diff0} Consider the  function $F$
defined by (\ref{F1}), where $f\in C^{1+\delta}(\mathbb{T})$ for some $\delta >0$ such that $\{f'(3^n)\}_{n\ge 0}$ is a Riesz system in $L^2(\mathbb{T})$ and $(a_n)$ is a sequence of numbers satisfying the condition (\ref{Cond-a}).
  We have the following dichotomy:\\
\indent {\rm (a)} \  If $\sum_{n=0}^\infty |a_n|^2 =\infty$, then
$F$ is almost everywhere non-differentiable and admits a set of full Hausdorff dimension of differentiable
points.\\
\indent {\rm (b)} \  If $\sum_{n=0}^\infty |a_n|^2 <\infty$, then
$F$ is almost everywhere differentiable and admits a set of full Hausdorff dimension of non-differentiable
points.
\end{theorem}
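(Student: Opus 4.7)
The plan is to identify the formal termwise derivative $G(x)=\sum_{n\ge 0} a_n f'(3^n x)$ of $F$ and reduce differentiability of $F$ at $x$ to convergence of the partial sums $S_N(x)=\sum_{n<N} a_n f'(3^n x)$. Fix small $h>0$ and choose $N$ with $3^{-(N+1)}\le h<3^{-N}$; split $F(x+h)-F(x)$ into $n<N$ (low) and $n\ge N$ (high). On the low part, the $C^{1+\delta}$ hypothesis supplies the Taylor expansion $3^{-n}[f(3^n(x+h))-f(3^n x)] = h f'(3^n x)+O(h^{1+\delta}3^{n\delta})$; summing and using $h^{1+\delta}3^{N\delta}\asymp h$ together with $a_n\to 0$ (splitting an initial segment from the tail), the total error is $o(h)$. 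On the high part, boundedness of $f$ and $a_n\to 0$ give a contribution $O((\sup_{n\ge N}|a_n|)3^{-N})=o(h)$. Therefore $(F(x+h)-F(x))/h = S_N(x)+o(1)$, so $F'(x)$ exists iff $\lim_N S_N(x)$ exists, and when both exist they coincide.

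The almost-everywhere halves of (a) and (b) then follow from Theorem~\ref{main} applied to $Tx=3x\bmod 1$ on $\mathbb{T}$ with $\mu$ Lebesgue and $f'$ in place of $f$: $f'$ has zero integral by periodicity of $f$ and is $C^\delta$, and Lebesgue is the Gibbs measure of the zero potential for the tripling map, so the transfer-operator estimates $\|\mathcal{L}^n f'\|_\infty=O(3^{-n\delta})$ developed in Section~4 verify (H1) and (H2). By hypothesis $\{f'\circ T^n\}$ is a Riesz system, so Theorem~\ref{main} gives $S_N(x)$ convergent $\mu$-a.e.\ iff $\sum|a_n|^2<\infty$, which combined with the preceding reduction yields a.e.\ differentiability in (b) and a.e.\ nondifferentiability in (a).

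It remains to exhibit, in each case, sets of full Hausdorff dimension exhibiting the opposite behaviour. In case (a) one builds a Cantor-like $E\subset\mathbb{T}$ of Hausdorff dimension $1$ on which $S_N(x)$ converges: in the triadic symbolic coding of $T$ prescribe, at each scale, triadic digits forcing near-cancellation of the incremental contribution to $S_N$, and bound the dimension below by a Billingsley/Frostman argument using a Bernoulli measure on $E$ whose entropy tends to $\log 3$. In case (b), exploit that $\sum a_n$ diverges and $a_n\to 0$ to select $z^{\ast}$ with $f'(z^{\ast})\ne 0$ and a sparse subsequence $(n_k)$ with $\sum_k a_{n_k} f'(z^{\ast})$ divergent; a shrinking-target construction at times $n_k$ produces a full-dimensional set of $x$ whose orbit points $3^{n_k}x$ cluster at $z^{\ast}$, forcing $S_N(x)$ to diverge and $F$ to be nondifferentiable at $x$. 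This third step is the main obstacle: the triadic reduction and the appeal to Theorem~\ref{main} are essentially mechanical, whereas the dimension claims require carefully tuning the symbolic/Diophantine orbit conditions against the specific weights $(a_n)$ while keeping enough entropy to reach Hausdorff dimension $1$.
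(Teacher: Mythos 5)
Your first two steps track the paper: the reduction of differentiability of $F$ at $x$ to convergence of $\sum_n a_nf'(3^nx)$ is Proposition \ref{Diff-Cov} (your cutoff $3^{-(N+1)}\le h<3^{-N}$ works, provided the initial-segment/tail splitting you mention is actually carried out to upgrade the low-frequency error from $O(h)$ to $o(h)$), and the almost-everywhere halves of (a) and (b) follow exactly as in the paper from Theorem \ref{main} for the tripling map with Lebesgue measure. The problems are in the two full-Hausdorff-dimension claims, which are the real content of the theorem.

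In case (b) your argument has a genuine logical error: divergence of the sub-sum $\sum_k a_{n_k}f'(3^{n_k}x)$ over a sparse subsequence does not imply divergence of the full series $\sum_n a_nf'(3^nx)$, since the uncontrolled terms with $n\notin\{n_k\}$ can restore convergence (compare $\sum_n(-1)^n/\sqrt{n}$, which converges although its sub-sum over even indices diverges); moreover, for signed $(a_n)$ the divergence of $\sum a_n$ need not produce any sparse subsequence with divergent sub-sum. So the shrinking-target set you construct need not lie in $S(F)$ at all. The paper avoids this by changing the measure rather than the orbit: for the Gibbs measure $\mu_t$ of the potential $tf'$ with small $t\ne0$, one has $m_t:=\int f'\,d\mu_t=P'(t)\ne0$ (strict convexity of the pressure, which uses that $f'$ is not a coboundary — a consequence of the Riesz-system hypothesis), Theorem \ref{main} applied to the centered series $\sum a_n[f'(3^nx)-m_t]$ gives $\mu_t$-a.e.\ convergence, and $m_t\sum a_n$ diverges, so $\sum a_nf'(3^nx)$ diverges $\mu_t$-a.e.\ and $\dim_H S(F)\ge\dim\mu_t\to1$ as $t\to0$. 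In case (a) your Cantor-set sketch does not engage with the actual difficulty of controlling the weighted sums while keeping entropy near $\log 3$; the paper instead invokes the Fan--Schmeling theorem \cite{FS} that the set $\{x:\sum_{k\le n}f'(3^kx)=O(1)\}$ has Hausdorff dimension $1$, and then converts bounded unweighted Birkhoff sums into convergence of the weighted series by Abel summation, using $a_n\to0$ and $\sum_n|a_n-a_{n+1}|<\infty$ — a hypothesis of the theorem that your argument never uses and that is needed precisely here.
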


 Notice that the condition $\lim_{n\to 0} a_n=0$ is necessary. See \cite{Katznelson} for the fact that
 if $a_n\not\to 0$, then $\sum a_n 3^{-n} e^{2\pi i 3^n x}$ is nowhere differentiable.
 See \cite{BBR} for information on the classical Weierstrass functions.
 \medskip

 The rest of the  paper is organized as follows. Section 2 is devoted to the proof of Theorem \ref{main}.
 Section 3 characterizes Riesz systems of the form $\{f \circ T^n\}$. Section 4 discusses the
 conditions (H1) and (H2) in Theorem \ref{main} in the context of Gibbs measures of hyperbolic dynamical systems.
 The differentiability of  Weierstrass type functions is studied in the last section.

\setcounter{equation}{0}

\section{Almost everywhere convergence}
In this section, we prove a result which is a little more general than  Theorem \ref{main}, our main result
announced in the introduction.

\subsection{Some preparative lemmas}

The material in this subsection is 
standard. For the sake of completeness, we will give the proofs of Lemma \ref{MaxIneq} and Lemma \ref{Azuma}
in an appendix after the proof of Theorem \ref{main}.
Our proof of Theorem \ref{main} is based the following elementary but powerful maximal inequality.
It was first used by Erd\"{o}s in the setting of trigonometric series.
Steckin stated it without giving proof and it was first proved by Gaposhkin \cite{Gaposhkin}.
Moricz made a generalization in \cite{Moricz1976}. 

\begin{lemma} \label{MaxIneq} Let $\{\xi_n\}_{n\ge 0}$ be a sequence of random variables
and $\{a_n\}_{n\ge 0}$  a sequence of numbers.
For $0\le p\le q$, denote $S_{p, q} =\sum_{n=p}^q a_n\xi_n$.
Suppose there exist constants $\beta >2$
and $C >0$ such that for all positive integers $p$ and $q$ with $p\le q$ we have
$$
     \|S_{p, q}\|_\beta  \le C \sqrt{\sum_{n=p}^q |a_n|^2}.
$$
Then there exists $C' >0$ depending only on $\beta $ and $C$ such that
$$
    \left\|\max_{p\le k\le q}|S_{p, k}|\right\|_\beta  \le C' \sqrt{\sum_{n=p}^q |a_n|^2}
$$
holds for all $0\le p\le q$.
We can take $C' = \frac{C}{1- 2^{1/\beta -1/2}}$.
\end{lemma}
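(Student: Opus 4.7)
The plan is to prove the maximal inequality by a dyadic chaining of the partial sums, in which the strict inequality $\beta > 2$ is what produces the finite constant. Take $p=0$ without loss of generality; by padding the coefficient sequence with zeros if necessary, write $N := q+1 = 2^m$, and set $A := \bigl(\sum_{n=0}^{q} |a_n|^2\bigr)^{1/2}$. For each $k \in \{0, 1, \ldots, N\}$, the binary digits of $k$ give a canonical decomposition of $\{0, 1, \ldots, k-1\}$ into at most $m$ pairwise disjoint dyadic intervals, one at each scale $2^0, 2^1, \ldots, 2^{m-1}$. Correspondingly, $S_{0,k} = \sum_{j=0}^{m-1} \varepsilon_j(k)\, S(I_j(k))$ with $I_j(k)$ an admissible dyadic interval of length $2^j$, yielding the pointwise estimate
$$\max_{0 \le k \le N} |S_{0, k}| \;\le\; \sum_{j=0}^{m-1} V_j, \qquad V_j := \max_{I \text{ admissible}, \, |I| = 2^j} |S(I)|.$$
Minkowski's inequality in $L^\beta$ then gives $\bigl\|\max_k |S_{0, k}|\bigr\|_\beta \le \sum_{j=0}^{m-1} \|V_j\|_\beta$.

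The key step is a refined level-wise bound on $\|V_j\|_\beta$. From the elementary union bound $\|V_j\|_\beta^\beta \le \sum_I \|S(I)\|_\beta^\beta$ and the hypothesis $\|S(I)\|_\beta \le C b_I$ with $b_I^2 := \sum_{n \in I} |a_n|^2$ (where the admissible blocks at level $j$ are pairwise disjoint, so $\sum_I b_I^2 \le A^2$), one has to extract a decay factor in $j$. Using $\beta > 2$ together with a careful application of Hölder tuned to the number of admissible blocks at scale $2^j$, one produces an estimate of the form
$$\|V_j\|_\beta \;\le\; C\, A\, 2^{j(1/\beta - 1/2)}.$$
Since the exponent $1/\beta - 1/2$ is strictly negative for $\beta > 2$, summing the geometric series gives
$$\bigl\|\max_k |S_{0,k}|\bigr\|_\beta \;\le\; C\, A \sum_{j \ge 0} 2^{j(1/\beta - 1/2)} \;=\; \frac{C}{1 - 2^{1/\beta - 1/2}}\, A,$$
which is exactly the claimed bound with $C' = C/(1 - 2^{1/\beta - 1/2})$.

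I expect the refined level-wise estimate to be the principal obstacle. The naive route using only the inclusion $\ell^2 \hookrightarrow \ell^\beta$ (valid for $\beta \ge 2$) would yield $\|V_j\|_\beta \le C A$ per level and therefore the classical Rademacher-Menshov bound $\bigl\|\max_k |S_{0,k}|\bigr\|_\beta \le m\, C A$, i.e., a spurious logarithmic factor and a degeneracy at $\beta = 2$. The improvement must exploit the strict inequality $\beta > 2$ to extract the extra factor $2^{j(1/\beta - 1/2)}$, whose geometric ratio $2^{1/\beta - 1/2} < 1$ is precisely what drives the convergence of the geometric series and produces the finite constant $C/(1 - 2^{1/\beta - 1/2})$.
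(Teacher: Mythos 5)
There is a genuine gap: the ``refined level-wise bound'' $\|V_j\|_\beta \le C\,A\,2^{j(1/\beta-1/2)}$, which is the entire content of the argument, is asserted rather than proved, and in fact it is false for index-based dyadic blocks. Take $a_0=A$ and $a_n=0$ for $n\ge 1$, with $\|\xi_0\|_\beta=C$ (consistent with the hypothesis). Then for every level $j$ the dyadic block $I=\{0,\dots,2^j-1\}$ satisfies $S(I)=A\xi_0$, so $\|V_j\|_\beta\ge CA$ at \emph{every} scale, and no geometric decay in $j$ (in either orientation --- the flat sequence $a_n=N^{-1/2}$ shows that even in the favourable case the decay is in $m-j$, not $j$) can hold uniformly over coefficient sequences. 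Consequently $\sum_j\|V_j\|_\beta$ can be of order $mCA$, and the dyadic chaining with level-wise $L^\beta$ bounds cannot do better than the Rademacher--Menshov logarithm. No application of H\"older ``tuned to the number of blocks'' repairs this, because the obstruction is the possible concentration of the variance on a single block at each scale, not the counting of blocks.

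The paper's proof circumvents exactly this difficulty by splitting according to \emph{cumulative variance} rather than index: it runs an induction on $q-p$, choosing the cut point $q'$ to be the least index at which $\sigma_{p,q'}^2$ exceeds $\tfrac12\sigma_{p,q}^2$, so that both remaining pieces carry at most half the variance. The pointwise bound $M_{p,q}\le |S_{p,q'}|+(M_{q'+1,q}^\beta+M_{p,q'-1}^\beta)^{1/\beta}$, Minkowski's inequality, and the induction hypothesis then give the recursion $C_\beta \ge 1 + C_\beta 2^{1/\beta-1/2}$, which is solvable precisely because $\beta>2$, yielding $C'=C/(1-2^{1/\beta-1/2})$. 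If you want to keep a chaining picture, you would have to make the blocks at level $j$ adapted to the variance profile (each carrying variance $\lesssim 2^{-j}A^2$), which is essentially the paper's induction unrolled; with the index-dyadic blocks you chose, the key estimate you need does not hold.
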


Now let us present a martingale decomposition in the dynamical setting and prove an Azuma type
inequality.
Let $(X, \mathcal{B}, T, \mu)$ be a measure preserving dynamical system.
    Let
    $$
          \mathcal{B}^i = T^{-i} \mathcal{B}, \qquad \mathcal{B}^\infty = \lim_{i\to \infty} T^{-i} \mathcal{B}.
    $$
    For $\phi \in L^1(\mu)$ and $i\ge 0$, define
    $$
           d_i(\phi) = \mathbb{E}(\phi|\mathcal{B}^{i}) - \mathbb{E}(\phi|\mathcal{B}^{i+1}).
    $$
    Notice that $d_i(\phi)=0$ if $\phi$ is $\mathcal{B}^{i+1}$-measurable.
    We have the following decomposition for $\phi$.

    \begin{lemma}\label{decomposition} Let $\phi \in L^1(\mu)$. Suppose that  $\lim_{n\to \infty}\mathbb{E} (\phi| \mathcal{B}^n)=0$ a.e. Then
we have the
decomposition
$$
      \phi = \sum_{i=0}^\infty d_i(\phi)
$$
where the series converges almost surely.
\end{lemma}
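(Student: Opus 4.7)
The statement is essentially a telescoping identity once one recognizes that $\mathcal{B}^0 = T^{-0}\mathcal{B} = \mathcal{B}$, so the plan is short and there is no real obstacle to speak of.

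First, since $\phi \in L^1(X,\mathcal{B},\mu)$, the function $\phi$ is $\mathcal{B}^0$-measurable, hence $\mathbb{E}(\phi|\mathcal{B}^0) = \phi$ almost surely. Second, I would write the $N$-th partial sum of the proposed series and observe that it telescopes:
$$
\sum_{i=0}^{N-1} d_i(\phi) \;=\; \sum_{i=0}^{N-1}\bigl[\mathbb{E}(\phi|\mathcal{B}^{i}) - \mathbb{E}(\phi|\mathcal{B}^{i+1})\bigr] \;=\; \mathbb{E}(\phi|\mathcal{B}^{0}) - \mathbb{E}(\phi|\mathcal{B}^{N}) \;=\; \phi - \mathbb{E}(\phi|\mathcal{B}^{N}).
$$
Here I am using that the cancellation is pointwise (after choosing fixed versions of the conditional expectations), so no integration or limit theorem is needed at this stage.

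Finally, I would invoke the hypothesis $\lim_{N\to\infty}\mathbb{E}(\phi|\mathcal{B}^N) = 0$ almost surely to let $N\to\infty$ in the displayed identity. The right-hand side converges a.s.\ to $\phi$, so the partial sums $\sum_{i=0}^{N-1} d_i(\phi)$ converge a.s.\ to $\phi$, which is exactly what the lemma claims. No summability or $L^p$ estimate on the $d_i(\phi)$ is required, because the convergence in the statement is in the sense of partial sums of a telescoping series and not in any absolute sense.

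The only mild point worth being explicit about in the write-up is the identification $\mathcal{B}^0 = \mathcal{B}$; beyond that, the argument is purely formal, and the hypothesis of the lemma has been tailored precisely so that the telescoped remainder $\mathbb{E}(\phi|\mathcal{B}^N)$ vanishes in the limit.
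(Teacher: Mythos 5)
Your proof is correct and is exactly the intended argument: the paper states this lemma without proof precisely because the partial sums telescope to $\phi - \mathbb{E}(\phi|\mathcal{B}^N)$ and the hypothesis kills the remainder almost surely. Nothing further is needed.
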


In general, the reverse martingale $\mathbb{E}(\phi|\mathcal{B}^{n})$ converges to a limit, which is
$\mathcal{B}^\infty$-measurable but may be not a constant, even if $\mu$ is ergodic. For example,
it is the case of irrational rotation.
 However if $\mu$ is exact, i.e. trivial on $\mathcal{B}^\infty$, then the limit is zero
 for any $\phi$ such that
$\mathbb{E} \phi =0$. Note that the condition $\lim_{n\to \infty}\mathbb{E} (\phi| \mathcal{B}^n)=0$
implies $\mathbb{E} \phi =0$.
\medskip

 The following is an inequality of Azuma type for $\sum_{i=0}^\infty d_i(\phi)$.

\begin{lemma} \label{Azuma} Let $\phi \in L^1(\mu)$ be a real function. Suppose
 $$\lim_{n\to \infty} \mathbb{E} (\phi|\mathcal{B}^n) =0, \qquad
\sum_{i=0}^\infty \|d_i(\phi)\|_\infty^2<\infty.
$$
 Then $\phi$ is subgaussian in the sense that
for any real number $\lambda$,
$$
    \mathbb{E} \exp(\lambda \phi) \le \exp \left(\frac{\lambda^2}{2} \sum_{i=0}^\infty \|d_i(\phi)\|_\infty^2\right).
$$
Consequently, for any positive number
$p\ge 1$, we have
$$
     \left\|\phi\right\|_{L^p(\mu)} \le \sqrt{2} \Gamma(p/2)^{1/p}  \sqrt{\sum_{i=0}^\infty \|d_i(\phi)\|_\infty^2}.
$$
\end{lemma}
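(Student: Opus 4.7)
The plan is to reduce the claim to Hoeffding's lemma for ordinary martingale differences by reversing the filtration. By Lemma \ref{decomposition}, under the hypothesis $\mathbb{E}(\phi|\mathcal{B}^n)\to 0$ we have $\phi = \sum_{i\ge 0} d_i(\phi)$ a.s.; set the partial sums
\[
T_N = \sum_{i=0}^N d_i(\phi) = \mathbb{E}(\phi|\mathcal{B}^0)-\mathbb{E}(\phi|\mathcal{B}^{N+1}).
\]
For fixed $N$, introduce the \emph{increasing} filtration $\mathcal{G}_k := \mathcal{B}^{N-k}$ for $k=0,\dots,N+1$. Then the reindexed variables $X_k := d_{N-k+1}(\phi)$ are $\mathcal{G}_k$-measurable, satisfy $\mathbb{E}(X_k\mid\mathcal{G}_{k-1})=0$ (this is exactly the defining property of the conditional-expectation differences), and $|X_k|\le \|d_{N-k+1}(\phi)\|_\infty$. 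Thus $(X_k)$ is a bounded martingale difference sequence in the ordinary sense, and $T_N = \sum_k X_k$.

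Next I would apply Hoeffding's lemma conditionally: for any mean-zero random variable $X$ bounded by $B$ and measurable w.r.t.\ $\mathcal{G}$ with $\mathbb{E}(X\mid\mathcal{F})=0$, one has $\mathbb{E}(e^{\lambda X}\mid\mathcal{F})\le e^{\lambda^2 B^2/2}$. Iterating from the top of the filtration downward gives
\[
\mathbb{E}\exp(\lambda T_N)\le \exp\Bigl(\tfrac{\lambda^2}{2}\sum_{i=0}^{N}\|d_i(\phi)\|_\infty^2\Bigr).
\]
Since $T_N\to\phi$ a.s., Fatou's lemma applied to $e^{\lambda T_N}$ (nonnegative) yields the desired subgaussian MGF bound for $\phi$, with $\sigma^2:=\sum_{i\ge 0}\|d_i(\phi)\|_\infty^2$ (finite by hypothesis).

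From the MGF bound I would obtain the usual Chernoff tail bound $\mu(|\phi|>t)\le 2e^{-t^2/(2\sigma^2)}$ by optimizing in $\lambda$ (applied to both $\phi$ and $-\phi$, which is legitimate since the same bound holds with $\lambda$ replaced by $-\lambda$). Then the layer-cake representation
\[
\|\phi\|_p^p = p\int_0^\infty t^{p-1}\mu(|\phi|>t)\,dt
\]
combined with the substitution $u=t^2/(2\sigma^2)$ produces the $\Gamma$-function in the final moment bound; the main subtlety is just arithmetic bookkeeping in this integral to match the stated constant $\sqrt{2}\,\Gamma(p/2)^{1/p}$.

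The conceptual obstacle is really the filtration reversal in step~1, i.e.\ recognizing that although $(\mathcal{B}^i)$ is \emph{decreasing}, the martingale-difference structure of $d_i(\phi)$ can be read from the opposite direction; everything else is a mechanical application of Hoeffding plus Fatou plus a standard subgaussian-to-moments computation.
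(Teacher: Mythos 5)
Your proposal is correct and follows essentially the same route as the paper: the paper also bounds each conditional moment generating function by $\mathbb{E}(e^{\lambda d_i}\mid\mathcal{B}^{i+1})\le\cosh(\lambda\|d_i\|_\infty)\le e^{\lambda^2\|d_i\|_\infty^2/2}$ (proving the conditional Hoeffding bound inline via convexity of the exponential rather than citing it), iterates this along the filtration by peeling off $d_0$ against $\mathcal{B}^1$, passes to the limit by Fatou, and then derives the $L^p$ bound from the Chernoff tail estimate and the layer-cake formula exactly as you describe. The only blemish is a harmless off-by-one in your reindexing (as written, $\mathcal{G}_{N+1}=\mathcal{B}^{-1}$; you want $\mathcal{G}_k=\mathcal{B}^{N+1-k}$), which does not affect the argument since the key identity $\mathbb{E}(d_i\mid\mathcal{B}^{i+1})=0$ is correctly used.
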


   We will apply the above Azuma inequality to
   $\phi = \sum_{k=0}^n a_k f \circ T^k$. The following well known lemma will be useful
   in the computation.

   \begin{lemma}\label{Lem*} For any $k\le i$, we have $\mathbb{E} (\phi \circ T^k| \mathcal{B}^i) = \mathbb{E} (\phi | \mathcal{B}^{i-k}) \circ T^{k}$.
    \end{lemma}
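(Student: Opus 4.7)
The plan is to verify the two defining properties of conditional expectation for the right-hand side: measurability with respect to $\mathcal{B}^i$, and the correct integral identity over sets in $\mathcal{B}^i$. The only non-trivial tool is the $T$-invariance of $\mu$, which lets me push the factor $T^k$ through integrals via $\int h\circ T^k\, d\mu = \int h\, d\mu$.

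First I would handle measurability. By definition, $\mathbb{E}(\phi|\mathcal{B}^{i-k})$ is $\mathcal{B}^{i-k} = T^{-(i-k)}\mathcal{B}$-measurable, so it can be written as $g\circ T^{i-k}$ for some $\mathcal{B}$-measurable function $g$. Composing with $T^k$ gives
$$
   \mathbb{E}(\phi|\mathcal{B}^{i-k})\circ T^k \;=\; g\circ T^{i-k}\circ T^k \;=\; g\circ T^i,
$$
which is $T^{-i}\mathcal{B} = \mathcal{B}^i$-measurable, as required. (The condition $k\le i$ is used to ensure $i-k\ge 0$ so that $\mathcal{B}^{i-k}$ makes sense.)

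Next I would verify the integral identity. Any $A\in\mathcal{B}^i$ has the form $A=T^{-i}B$ with $B\in\mathcal{B}$, and $\mathbf{1}_A = \mathbf{1}_B\circ T^i = (\mathbf{1}_{T^{-(i-k)}B})\circ T^k$. Using this and $T$-invariance,
$$
   \int_A \phi\circ T^k\, d\mu
   \;=\; \int \bigl(\mathbf{1}_{T^{-(i-k)}B}\cdot \phi\bigr)\circ T^k\, d\mu
   \;=\; \int_{T^{-(i-k)}B} \phi\, d\mu,
$$
and the same manipulation applied to $\mathbb{E}(\phi|\mathcal{B}^{i-k})\circ T^k$ gives
$$
   \int_A \mathbb{E}(\phi|\mathcal{B}^{i-k})\circ T^k\, d\mu
   \;=\; \int_{T^{-(i-k)}B} \mathbb{E}(\phi|\mathcal{B}^{i-k})\, d\mu.
$$
Since $T^{-(i-k)}B \in \mathcal{B}^{i-k}$, the defining property of conditional expectation makes the two right-hand sides coincide, which finishes the proof.

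There is no real obstacle here; the only care needed is bookkeeping with the identity $\mathbf{1}_{T^{-i}B}=\mathbf{1}_{T^{-(i-k)}B}\circ T^k$ and recognizing that $T$-invariance of $\mu$ is exactly what is needed to move the $T^k$ inside and then discard it. Everything else is formal.
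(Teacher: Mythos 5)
Your proof is correct. The paper states this lemma as ``well known'' and gives no proof at all, so there is nothing to compare against; your argument --- verifying the two defining properties of conditional expectation for the candidate $\mathbb{E}(\phi|\mathcal{B}^{i-k})\circ T^k$, using the factorization of a $T^{-(i-k)}\mathcal{B}$-measurable function as $g\circ T^{i-k}$ for measurability and the $T$-invariance of $\mu$ together with the identity $\mathbf{1}_{T^{-i}B}=\mathbf{1}_{T^{-(i-k)}B}\circ T^k$ for the integral identity --- is the standard one and is complete. (A minor remark: the measurability step can be done even more directly by noting that every preimage under $\mathbb{E}(\phi|\mathcal{B}^{i-k})\circ T^k$ lies in $T^{-k}\bigl(T^{-(i-k)}\mathcal{B}\bigr)=T^{-i}\mathcal{B}$, avoiding the factorization lemma; and one should also record that $\mathbb{E}(\phi|\mathcal{B}^{i-k})\circ T^k$ is integrable, which again follows from $T$-invariance.)
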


   The following well known lemma shows that $\mathbb{E} (\phi|T^{-k}\mathcal{B})$
   can be expressed by the Perron-Frobenius operator $\widehat{T}_\mu: L^1(\mu)\to L^1(\mu)$ which is  defined by
    $$
          \int g \cdot \widehat{T}_\mu \phi d\mu = \int g\circ T \cdot \phi d\mu
    $$
    for $\phi\in L^1(\mu)$ and $ g\in L^\infty(\mu)$. The operator $\widehat{T}_\mu$ will be simply denoted $\widehat{T}$.

   \begin{lemma}\label{transfer}
    $\mathbb{E} (\phi|T^{-k}\mathcal{B})= \widehat{T}^kf \circ T^k$.
   \end{lemma}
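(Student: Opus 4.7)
The plan is to verify that $\widehat{T}^k\phi\circ T^k$ satisfies the two defining properties of the conditional expectation $\mathbb{E}(\phi\mid T^{-k}\mathcal{B})$: it must be $T^{-k}\mathcal{B}$-measurable, and it must integrate against every bounded $T^{-k}\mathcal{B}$-measurable test function in the same way that $\phi$ does. (I write $\phi$ in place of the $f$ appearing in the statement, to stay consistent with the notation of the preceding lemmas.)

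Measurability is immediate: by construction, $\widehat{T}^k\phi$ is $\mathcal{B}$-measurable, hence its composition with $T^k$ is $T^{-k}\mathcal{B}$-measurable. For the integral identity, I use the standard fact that every bounded $T^{-k}\mathcal{B}$-measurable function has the form $g\circ T^k$ for some $g\in L^\infty(\mu)$ that is $\mathcal{B}$-measurable. Thus it is enough to verify
$$
\int (g\circ T^k)\cdot(\widehat{T}^k\phi\circ T^k)\, d\mu
\;=\; \int (g\circ T^k)\cdot \phi\, d\mu
$$
for every such $g$. The left-hand side equals $\int\bigl(g\cdot \widehat{T}^k\phi\bigr)\circ T^k\, d\mu = \int g\cdot \widehat{T}^k\phi\, d\mu$ by the $T$-invariance of $\mu$.

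It then remains to unwind $\widehat{T}^k$ by iterating its defining relation $\int u\cdot \widehat{T}\psi\, d\mu = \int (u\circ T)\cdot \psi\, d\mu$. Applying this $k$ times, with $u = g\circ T^{j}$ and $\psi = \widehat{T}^{k-j-1}\phi$ at the $j$-th step ($j=0,1,\ldots,k-1$), one obtains $\int g\cdot \widehat{T}^k\phi\, d\mu = \int (g\circ T^k)\cdot \phi\, d\mu$, which is exactly what is needed. The argument is a direct unwinding of definitions and carries no real obstacle; the only point to watch is checking at each iteration that $g\circ T^j\in L^\infty(\mu)$ and $\widehat{T}^{k-j-1}\phi\in L^1(\mu)$, so that the duality pairing underlying the definition of $\widehat{T}$ is applied legitimately, which is automatic since $\mu$ is $T$-invariant.
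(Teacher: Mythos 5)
Your proof is correct. The paper states this lemma as ``well known'' and supplies no proof of its own, so there is nothing to compare against; your verification --- checking $T^{-k}\mathcal{B}$-measurability, factoring a bounded $T^{-k}\mathcal{B}$-measurable test function as $g\circ T^k$, using $T$-invariance of $\mu$, and iterating the adjoint relation defining $\widehat{T}$ --- is precisely the standard argument one would write down.
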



\subsection{Proof of Theorem \ref{main}}
Now let us prove Theorem \ref{main}. Actually we prove a little more.
 Recall that   for $f\in L^1(\mu)$ and $i\ge 0$,
    $$
           d_i(f) = \mathbb{E}(f|\mathcal{B}^{i}) - \mathbb{E}(f|\mathcal{B}^{i+1}).
    $$

    \begin{theorem} \label{main2} Assume that $(X, \mathcal{B}, T, \mu)$ is an ergodic measure-preserving dynamical system.
      Let $(f_n) \subset L^1(\mu)$. Suppose
      \\
      \ \indent {\rm (H1)}\ \ $\forall k\ge 0$,
      $\lim_{n\to \infty} \mathbb{E} (f_k|T^{-n}\mathcal{B})=0$; \\
      \ \indent {\rm (H2)} \ There is a function $\Delta : \mathbb{N}\to \mathbb{R}^+$ with $\sum_{i=0}^\infty \Delta (i)<\infty$ such that for all $0\le k\le i$
      $$
           \|d_i(f_k\circ T^k)\|_{L^\infty(\mu)} \le
          \Delta(i-k).
      $$
      Then for any complex sequence $(a_n)\subset \mathbb{C}$ such that $\sum_{n=0}^\infty |a_n|^2<\infty$,
      the ergodic series $\sum_{n=0}^\infty a_n f_n(T^n x)$ converges almost everywhere and
      we have the following Khithchine inequality
      $$
         \left\| \sum_{n=0}^\infty a_n f_n(T^n x) \right\|_{L^p(\mu)}
         \le C_p \sqrt{\sum_{n=0}^\infty |a_n|^2}, \quad (p\ge 1)
      $$
      where $C_p>0$ is a constant  independent of $(a_n)$. If furthermore $\{f_n \circ T^n\}$ is a Riesz system,
       then $\sum_{n=0}^\infty |a_n|^2=\infty$ implies the almost everywhere
         divergence of the
      series $\sum_{n=0}^\infty a_n f_n(T^n x)$.
    \end{theorem}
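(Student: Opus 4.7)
The plan is to prove the convergence half by combining Lemma~\ref{Azuma} with the Erd\H{o}s--Gaposhkin maximal inequality (Lemma~\ref{MaxIneq}) to obtain an $L^{4}$ maximal inequality on the partial sums
\[
S_{p,q}=\sum_{n=p}^{q}a_{n}f_{n}(T^{n}\cdot),
\]
from which a.e.\ convergence and the Khintchine bound follow by standard arguments. The divergence half is then attacked via the Riesz lower bound combined with a Paley--Zygmund anti-concentration estimate, which is where the main obstacle lies.

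First I would estimate $\sum_{i\ge 0}\|d_{i}(S_{p,q})\|_{\infty}^{2}$. Since $f_{n}\circ T^{n}$ is $\mathcal{B}^{n}$--measurable, $d_{i}(f_{n}\circ T^{n})=0$ whenever $i<n$, while hypothesis (H2) gives $\|d_{i}(f_{n}\circ T^{n})\|_{\infty}\le\Delta(i-n)$ for $i\ge n$. Linearity of $d_{i}$ therefore yields
\[
\|d_{i}(S_{p,q})\|_{\infty}\le\sum_{n=p}^{\min(q,i)}|a_{n}|\,\Delta(i-n),
\]
whose right-hand side is a convolution on $\mathbb{Z}$. Young's convolution inequality then gives
\[
\sum_{i\ge 0}\|d_{i}(S_{p,q})\|_{\infty}^{2}\le\|\Delta\|_{\ell^{1}}^{2}\sum_{n=p}^{q}|a_{n}|^{2}.
\]
Hypothesis (H1) applied termwise transfers to $S_{p,q}$ by linearity, so Lemma~\ref{Azuma} (applied separately to real and imaginary parts if the data are complex) yields the uniform estimate
\[
\|S_{p,q}\|_{L^{r}(\mu)}\le K_{r}\sqrt{\sum_{n=p}^{q}|a_{n}|^{2}},\qquad r\ge 1,
\]
with $K_{r}$ depending only on $r$ and $\|\Delta\|_{\ell^{1}}$.

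Taking $r=\beta=4$ and plugging the above into Lemma~\ref{MaxIneq} upgrades the estimate to the maximal form
\[
\Big\|\max_{p\le k\le q}|S_{p,k}|\Big\|_{L^{4}(\mu)}\le K\sqrt{\sum_{n=p}^{q}|a_{n}|^{2}}.
\]
Letting $q\to\infty$ by monotone convergence in $k$, and then $p\to\infty$, the right-hand side tends to $0$, so the tails $\sup_{k\ge p}|S_{p,k}|$ vanish in $L^{4}$ and hence a.e.; this forces the partial sums $T_{N}=\sum_{n=0}^{N}a_{n}f_{n}\circ T^{n}$ to be a.s.\ Cauchy, proving a.e.\ convergence. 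Fatou applied to the $L^{r}$ bound above then yields the Khintchine inequality for the limit.

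For the divergence statement, assume $\{f_{n}\circ T^{n}\}$ is a Riesz system and $\sum|a_{n}|^{2}=\infty$. The Riesz lower bound gives $\|T_{N}\|_{2}^{2}\ge c_{0}\sum_{n=0}^{N}|a_{n}|^{2}\to\infty$, while the $L^{4}$ Khintchine bound above gives $\|T_{N}\|_{4}\le K'\|T_{N}\|_{2}$ uniformly in $N$. The classical Paley--Zygmund inequality then produces a constant $c>0$ with
\[
\mu\!\left(|T_{N}|>\tfrac{1}{2}\|T_{N}\|_{2}\right)\ge c
\]
for every $N$. The \emph{main obstacle} is converting this fixed positive-measure anti-concentration into almost everywhere divergence: a single application only rules out convergence on sets of measure strictly greater than $1-c$. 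To close the gap I would apply the same estimate to each tail difference $T_{N}-T_{M}$ (which inherits the Riesz constants from the full system), deduce that $\sup_{N>M}|T_{N}-T_{M}|=\infty$ on a set of measure at least $c$ for every $M$, and combine this with the fact that the convergence event lies in $\bigcap_{M}\mathcal{B}^{M}$ together with a reverse-martingale tail-triviality consequence of (H1) (or an ergodic exchange argument) to bootstrap $\mu(\text{convergence})\le 1-c$ up to $\mu(\text{convergence})=0$.
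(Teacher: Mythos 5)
Your convergence half is, up to cosmetic differences, the paper's own argument: you observe $d_i(f_n\circ T^n)=0$ for $i<n$, bound $\|d_i(S_{p,q})\|_\infty$ by the convolution $\sum_n|a_n|\Delta(i-n)$, and sum in $i$ (the paper uses Cauchy--Schwarz where you invoke Young's inequality, with the same constant $\|\Delta\|_{\ell^1}^2$); you then feed the resulting subgaussian/Khintchine bound with $\beta=4$ into Lemma~\ref{MaxIneq} and conclude by the standard Cauchy-in-$L^4$ argument. The Paley--Zygmund step of the divergence half also coincides with the paper's. So the only point of substance to discuss is the final upgrade from ``divergence on a set of measure $\ge c$'' to ``divergence almost everywhere'', which is precisely where you flagged the main obstacle.

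There your proposal has a genuine gap. You are right that the convergence event lies in $\mathcal{B}^\infty=\bigcap_M T^{-M}\mathcal{B}$, but the bootstrap you lean on --- ``a reverse-martingale tail-triviality consequence of (H1)'' --- is not available. Hypothesis (H1) is a statement about the particular functions $f_k$ only; it does not make $\mathcal{B}^\infty$ $\mu$-trivial. Triviality of $\mathcal{B}^\infty$ is exactness, which the theorem deliberately does not assume: the paper notes right after Lemma~\ref{decomposition} that even for ergodic $\mu$ the reverse-martingale limit need not be constant (irrational rotation), and one can take, say, the product of an exact system with an irrational rotation and functions $f_k$ depending only on the exact factor --- then (H1) holds for every $f_k$ while $\mathcal{B}^\infty$ contains the whole rotation factor and is far from trivial. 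Hence ``a $\mathcal{B}^\infty$-measurable set of measure at least $c$'' cannot be promoted to measure $1$ by any tail-triviality argument under the stated hypotheses. The paper closes the argument differently: it treats $\{\sup_{n}|S_n|=+\infty\}$ as a $T$-invariant event and invokes ergodicity of $\mu$ to pass from positive to full measure. (Even that invariance deserves a line of justification, since $S_n\circ T$ is a partial sum of the index-shifted series $\sum_k a_{k}f_{k}\circ T^{k+1}$; but an ergodicity/invariance argument of this kind, rather than tail triviality, is the route that actually uses the standing assumption that $\mu$ is ergodic.) To complete your proof you should replace the $\mathcal{B}^\infty$ step by such an invariance-plus-ergodicity argument.
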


\begin{proof}
We are going to apply the Azuma inequality (Lemma \ref{Azuma}) to
   $$\phi :=S_n := \sum_{k=0}^{n} a_k f_k\circ T^k \qquad (n\ge 0)$$

   First, using Lemma \ref{Lem*} and the hypothesis (H1) we check that
   $$
      \lim_{i\to \infty} \mathbb{E}(\phi|\mathcal{B}^i)
      = \sum_{k=0}^{n} a_k \lim_{i\to \infty} \mathbb{E}(f_k|\mathcal{B}^{i-k})\circ T^k =0.
   $$
   Secondly we check the other condition in Lemma \ref{Azuma}.
   For $i\le n$, we write
   $$
        S_n = S_{i} + R_{i}, \quad R_{i} =\sum_{i+1\le k<n} a_k f_k\circ T^k.
   $$
   For $i> n$, we take $R_{i}=0$ so that $S_{i} = S_n$.
   Since $R_{i}$ is $\mathcal{B}^{i+1}$-measurable, a fortiori $\mathcal{B}^i$-measurable, we have
   $$
        d_i(S_n) = \mathbb{E}(S_{i}|\mathcal{B}^i) - \mathbb{E}(S_{i}|\mathcal{B}^{i+1})
        = \sum_{k=0}^{i\wedge n} a_k [\mathbb{E}(f_k\circ T^k|\mathcal{B}^i) - \mathbb{E}(f_k\circ T^k|\mathcal{B}^{i+1})].
   $$
   Therefore, by Lemma \ref{Lem*} and (H2) we get
   $$
       \|d_i(S_n)\|_\infty \le \sum_{k=0}^{i\wedge n} |a_k| \Delta(i-k).
   $$
   Now, by the Cauchy-Schwarz inequality, we have
   $$
       \|d_i(S_n)\|_\infty^2 
       \le  \sum_{k=0}^{i\wedge n} \Delta(i-k)  \sum_{k=0}^{i\wedge n} |a_k|^2 \Delta(i-k)\le \Delta^*\sum_{k=0}^{i\wedge n} |a_k|^2 \Delta(i-k)
   $$
   where $\Delta^* = \sum_{i=0}^\infty \Delta(i)$. Then take sum over $i$ to get
   \begin{eqnarray*}
    \sum_{i=0}^\infty \|d_i(S_n)\|_\infty^2
    &\le & \Delta^* \sum_{i=0}^\infty \sum_{k=0}^{{i\wedge n}} |a_k|^2 \Delta(i-k)\\
    &=& \Delta^* \sum_{k=0}^{n} |a_k|^2 \sum_{i \ge k} \Delta(i-k)\\
    &=&
    \Delta^{*2}
    \sum_{k=0}^{n} |a_k|^2<\infty.
    \end{eqnarray*}
Thus we can apply Lemma \ref{Azuma} to obtain the Khintichine inequality
    $$
       \left\|\sum_{k=0}^{n} a_k f_k\circ T^k\right\|_{L^p(\mu)}
       \le \Delta^* C_p \sqrt{\sum_{k=0}^{n} |a_k|^2}.
    $$

This Khintchine inequality with any $p>2$ and the maximal inequality (Lemma \ref{MaxIneq})
immediately imply the result on the convergence, by a standard argument (see \cite{Shiryayev}, p. 251-252).

Prove now the assertion on the divergence.
The hypothesis of Riesz system implies that there exists a constant $C>0$ such that
$$
    \left\|\sum_{k=0}^{n} a_k f_k\circ T^k \right\|_2^2 \ge C^2 \sum_{k=0}^{n} |a_n|^2.
$$
By the Paley-Zygmund inequality (\cite{Kahane1985}, p. 8) and the already proved Khintchine inequality ($p=4$), for any $0<\lambda<1$ we have
$$
   \mu\left( |S_n|^2 \ge \lambda \mathbb{E} S_n^2\right) \ge (1- \lambda)^2 \frac{(\mathbb{E} |S_n|^2)^2}{\mathbb{E}|S_n|^4} \ge  (1- \lambda)^2 \frac{C^4}{C_4^4} >0.
$$
As $\mathbb{E} S_n^2 \ge C^2 \sum_{k=0}^{n} |a_n|^2$ tends to the infinity, it follows that
the event $\{\sup_{n\ge 0} |S_n|=+\infty\}$  has a strictly positive measure.
 However the event is invariant and the measure $\mu$ is ergodic. 
Thus
$\sup_{n\ge 0} |S_n|=+\infty$ $\mu$-a.e.
  \end{proof}

  Assume $f_k= f$ for all $k$. By Lemma \ref{Lem*}, we can write
   $$
       \mathbb{E}(f\circ T^k|\mathcal{B}^i) - \mathbb{E}(f\circ T^k|\mathcal{B}^{i+1})
       = \mathbb{E}(f|\mathcal{B}^{i-k})\circ T^k - \mathbb{E}(f|\mathcal{B}^{i+1-k})\circ T^k.
   $$
   In this case we can take
   $$
         \Delta(i) = \left\|\mathbb{E}(f|\mathcal{B}^{i}) - \mathbb{E}(f|\mathcal{B}^{i+1})\right\|_\infty.
   $$
   Thus Theorem\ref{main} follows immediately.

   \subsection{Appendix} We give here the proofs of Lemma \ref{MaxIneq} and Lemma \ref{Azuma}.

{\bf Proof of Lemma \ref{MaxIneq}.}  For $0\le p\le q$, let
$$
\sigma_{p, q}^2 = C^2\sum_{k=p}^q a_n^2, \qquad M_{p, q}=\max_{p\le k\le q}|S_{p, k}|.
$$
What we have to prove is the following proposition:
$$
  \\|S_{p, q}\|_\beta \le \sigma_{p, q}  \ \ \Rightarrow  \ \
  \left\|M_{p, q}\right\|_\beta \le C_\beta \sigma_{p, q}
$$
where $C_\beta >1$ is some constant. Let $d=q-p$. We prove this implication by induction on $d$.
If $d=0$, it is trivial for any $C_\beta \ge 1$. Assume $d\ge 1$.

Suppose the implication holds when $0\le q-p<d$. Now assume $q-p=d$. Let $q'\in [p, q]$ be the least
integer such that $\sigma_{p, q'}^2$ exceeds the half of $\sigma_{p, q}^2$. Then
\begin{equation}\label{M1}
    \sigma_{p, q'-1}^2 \le \frac{1}{2}\sigma_{p, q}^2 <\sigma_{p, q'}^2
\end{equation}
where, by convention,  $\sigma_{p, q'-1}^2$ is $0$  when $q'=p$. Consequently,
\begin{equation}\label{M2}
    \sigma_{q'+1, q}^2 = \sigma_{p, q}^2 - \sigma_{p, q'}^2 \le \frac{1}{2}\sigma_{p, q}^2
\end{equation}
where, by convention, $\sigma_{q'+1, q}^2$ is $0$  when $q'=q$. For $q'\le k \le q$, we have
$$
     |S_{p, k}| \le |S_{p, q'}| + |S_{q'+1, k}|\le |S_{p, q'}| + M_{q'+1, q}.
$$
For $p\le k<q'$, we have $|S_{p, k}|\le M_{p, q'-1}$. So for all $p\le k \le q$, we have
$$
      |S_{p, k}| \le  |S_{p, q'}| + (M_{q'+1, q}^\beta + M_{p, q'-1}^\beta)^{1/\beta}
$$
so that
$$
      |M_{p, q}| \le  |S_{p, q'}| + (M_{q'+1, q}^\beta + M_{p, q'-1}^\beta)^{1/\beta}
$$
By the Minkowski inequality in $L^\beta$, we get
$$
 \|M_{p, q}\|_\beta \le  \|S_{p, q'}\|_\beta + (\mathbb{E}M_{q'+1, q}^\beta + \mathbb{E}M_{p, q'-1}^\beta)^{1/\beta}.
$$
By the induction hypothesis and the inequalities (\ref{M1}) and (\ref{M2}), both $\mathbb{E}M_{q'+1, q}^\beta$
and $\mathbb{E}M_{p, q'-1}^\beta$ is bounded by $(C_\beta \sigma_{p, q}/\sqrt{2})^\beta$.
Thus
$$
 \|M_{p, q}\|_\beta \le  (1 + C_\beta 2^{\frac{1}{\beta}-\frac{1}{2}}) \sigma_{p, q}.
$$
Therefore $\|M_{p, q}\|_\beta \le C_\beta \sigma_{p, q}$ if
$$
1 + C_\beta 2^{\frac{1}{\beta}-\frac{1}{2}}\le C_\beta.
$$
We can choose $C_\beta = \frac{1}{1-  2^{\frac{1}{\beta}-\frac{1}{2}}}$.
$\Box$
\medskip


{\bf Proof of Lemma \ref{Azuma}.} The convexity of the exponential function implies
$$
      e^{u x} \le \frac{1+u}{2} e^x + \frac{1- u}{2} e^{-x}
      \qquad (x \in \mathbb{R}, -1\le u \le 1).
$$
For simplicity, write $\mathbb{E}^i(\cdot) = \mathbb{E}^i(\cdot|\mathcal{B}^i)$ and $d_i = d_i(\phi)$. Let $\lambda \in \mathbb{R}$. Since $\mathbb{E}^{i+1}d_i=0$, applying the above inequality to $u=\frac{d_i}{\|d_i\|_\infty}$
and $x = \lambda \|d_i\|_\infty $  we get
\begin{equation}\label{Estimate-di}
    \mathbb{E}^{i+1} e^{\lambda d_i}
    \le \cosh (\lambda \|d_i\|_\infty) \le e^{\lambda^2/2 \|d_i\|_\infty^2}.
\end{equation}
Then for any integer $n$, by the estimate (\ref{Estimate-di}) with $i=0$ we get
$$
   \mathbb{E} e^{\lambda \sum_{i=0}^n d_i} =  \mathbb{E} \left(
   e^{\lambda\sum_{i=1}^n d_i} \mathbb{E}^1 e^{ d_0}\right)
   \le e^{\lambda^2/2 \|d_0\|_\infty^2} \mathbb{E} e^{\lambda\sum_{i=1}^n d_i}.
   $$
By induction, we get
$$
    \mathbb{E} e^{\lambda \sum_{i=0}^n d_i} \le e^{\lambda^2/2\sum_{i=0}^{n-1} \|d_i(\phi)\|_\infty^2}
$$
Let $n\to \infty$, we obtain the subgaussian property, by Fatou lemma.

The $L^p$-norm of $\phi$ is estimated as follows. By the subgaussian property and the Markov inequality, we have
   $$
      \mathbb{E} |\phi|^p = p\int_0^\infty t^{p-1} \mu(|\phi|\ge t) d t
        \le 2 p\int_0^\infty t^{p-1} e^{\lambda^2\sigma^2/2 -\lambda t} dt
   $$
   where $\sigma^2 = \sum_{i=0}^\infty \|d_i\|_\infty^2$. Take $\lambda = t/\sigma^2$
   (the minimiser of $\lambda \mapsto \lambda^2\sigma^2/2 - \lambda t$). Then
   $$
       \mathbb{E} |\phi|^p \le 2 p \int_0^\infty t^{p-1} e^{-t^2/(2\sigma^2)} dt
       = \sigma^{p}\cdot 2 p \int_0^\infty s^{p-1} e^{-s^2/2} d s
       = 2^{p/2}\Gamma(p/2) \sigma^p.
   $$
   $\Box$
   \medskip

\medskip

\setcounter{equation}{0}

\section{Riesz system $\{f\circ T^n\}_{n\ge 0}$}

We present here a proof of Theorem \ref{ThmRiesz} which gives a necessary and sufficient condition for $\{f\circ T^n\}$ to be a Riesz system.
The condition is expressed by the spectral measure of $f$. For the notion
of spectral measure and the useful spectral lemma, we  refer to \cite{Krengel} (pp. 94-95).

\medskip

{\bf Proof of Theorem \ref{ThmRiesz}. }   We first make a remark. If $\phi$ is non negative continuous function on the circle $\mathbb{R}/2\pi\mathbb{Z}$ which is identified with $[-\pi, \pi]$, then there exists a sequence of complex polynomials $Q_n \in \mathbb{C}[X]$ such that
\begin{equation}\label{approx}
              \phi(t) = \lim_{n\to \infty} |Q_n(e^{it})|^2
\end{equation}
where the limit is uniform on $t \in [-\pi, \pi]$. In fact, consider the convolution
$\phi_n = \phi * K_n$ where $K_n$ is the $n$-th F\'ejer kernel. By the F\'ejer theorem,
$\phi_n$ converges uniformly to $\phi$.
The function $\phi_n$
is a real trigonometric polynomial of order $n$. Then by the Riesz lemma (see \cite{PS}, Exercise 40 in Chapter VI),
there exists a polynomial $Q_n\in \mathbb{C}[X]$
such that $\phi_n(t) = |Q_n(e^{it})|^2$. Thus (\ref{approx}) is proved.

Furthermore, if $\phi$ is non negative continuous function on the circle $\mathbb{R}/2\pi\mathbb{Z}$ satisfying $\phi(t)= (-t)$, then there exists a sequence of real polynomials $Q_n \in \mathbb{R}[X]$ such that
(\ref{approx}) holds. In fact, since both $\phi$ and $K_n$ are even, so is $\phi_n$. Then $\phi_n$
is in the space spanned by $1, \cos t, \cos 2t, \cdots,$  $ \cos n t$ or equivalently spanned by
$1, \cos t, \cos^2t, \cdots,$ $ \cos^n t $.  In other words, $\phi_n(t) = P_n(\cos t)$
for some polynomial $P_n \in \mathbb{R}[X]$.    Now we apply
 the Riesz lemma in its variant form (see  \cite{PS}, Exercise 41 in Chapter VI. also see \cite{Daubechies}, p. 172).

Let us start the proof for (i).
By the spectral lemma (\cite{Krengel}, p. 94), the inequality (\ref{QO1}) can be written as
\begin{equation*}\label{QO2}
    \forall Q \in \mathbb{C}[X], \quad
            \int_{-\pi}^\pi|Q(e^{i t})|^2 d\sigma_f (t) \le A^2 \int_{-\pi}^\pi |Q(e^{ i t})|^2 d\lambda(t).
      \end{equation*}
      By the above remark (see the equality (\ref{approx})), this inequality is equivalent to
\begin{equation*}\label{QO2}
    \forall \phi \in C([-\pi, \pi]) \ {\rm with}\ \phi\ge 0, \quad
            \int_{-\pi}^\pi \phi(t) d\sigma_f (t) \le A^2 \int_{-\pi}^\pi \phi(t)  d\lambda(t).
      \end{equation*}
      This is equivalent to what we have to prove for (i). Similar proof holds for (ii).

Now let us prove (iii).
By the spectral lemma (\cite{Krengel}, p. 94), the inequality (\ref{QO1}) can be written as
\begin{equation}\label{QO2}
    \forall Q \in \mathbb{R}[X], \quad
            \int_{-\pi}^\pi|Q(e^{i t})|^2 d\sigma_f (t) \le A^2 \int_{-\pi}^\pi |Q(e^{ i t})|^2 d\lambda(t).
      \end{equation}
Observe that $|Q(e^{i t})|^2$ is even. Then
$$
   \int_{-\pi}^\pi|Q(e^{i t})|^2 d\sigma_f^* (t)
   = \int_{-\pi}^\pi\frac{|Q(e^{i t})|^2  +|Q(e^{-i t})|^2}{2}d\sigma_f (t)
   =\int_{-\pi}^\pi|Q(e^{i t})|^2 d\sigma_f (t)
$$
So, the inequality (\ref{QO2}) becomes
\begin{equation}\label{QO3}
    \forall Q \in \mathbb{R}[X], \quad
            \int_{-\pi}^\pi|Q(e^{i t})|^2 d\sigma_f^* (t) \le A^2 \int_{-\pi}^\pi |Q(e^{ i t})|^2 \lambda(t).
      \end{equation}
      Using the remark concerning even functions $\phi$,
we get immediately that  the inequality (\ref{QO3}) is equivalent to the following inequality:
for all $ 0\le \phi \in C([-\pi, \pi])$ with $\phi(-t)=\phi(t)$,
\begin{equation}\label{QO4}
            \int_{-\pi}^\pi \phi d\sigma_f^*  \le A^2 \int_{-\pi}^\pi \phi d\lambda.
      \end{equation}
      The inequality (\ref{QO4}) remains true even if $\phi$ is not event. In fact, decompose
      $\phi = \phi_e + \phi_o$ where $\phi_e$ is even  and $\phi_o$ is odd. Then
      $$
          \int_{-\pi}^\pi\phi d\sigma_f^*  = \int_{-\pi}^\pi \phi_e d\sigma_f^*, \quad
          \int_{-\pi}^\pi\phi d\lambda = \int_{-\pi}^\pi \phi_e d\lambda
      $$
      because $\int \phi_o d\sigma_f^* =\int \phi_o d\lambda=0$. Notice that $\phi_e \ge 0$.
    The  inequality (\ref{QO4}) with arbitrary non-negative $\phi$ is what we have to prove
      for (i) in the case of real sequences $(a_n)$. In the same way, we can prove (ii) in the case of real sequences $(a_n)$.
$\Box$

\setcounter{equation}{0}

\section{Gibbs measures in hyperbolic dynamical systems}
 In this section, we examine the conditions (H1) and (H2) for Gibbs measures in  hyperbolic dynamical systems.
 Let us consider a topological dynamical system $(X, T)$ where
   $X$ is a compact metric space with metric $d$ and $T: X\rightarrow X$  a
continuous map (we can relax the compactness of the space $X$ and the continuity of the transformation $T$).
For a given strictly positive continuous
 $\psi: X \rightarrow {\mathbb R}^+_*$, called
a {\em potential}, we define
the {\em Ruelle-Perron-Frobenius operator}
${\mathcal L}={\mathcal L}_{\psi}$, simply called Ruelle operator, by
$$
{\mathcal L}\phi (x) =\sum_{y\in T^{-1}(x)} \psi (y) \phi (y)
$$
where $\phi$ is in a suitable space of functions on $X$. The Ruelle theorem states that under suitable conditions, there is a positive number $\rho>0$ and a positive function $h>0$ and a probability measure
$\nu$ such that$$
       {\mathcal L} h = \rho h, \quad {\mathcal L}^* \nu = \rho \nu.
$$
In this case, we usually consider the normalized Ruelle operator $L$ corresponding to the
new potential $\psi(x)h(Tx)/(\rho h(x))$. The measure $\mu$ defined by $d\mu = hd\nu$
is a $T$-invariant measure, called the Gibbs measure associated to $\psi$ and denoted $\mu_\psi$. It can be checked that
for this Gibbs measure $\mu=\mu_\psi$, the Perron-Frobenius operator $\widehat{T}_\mu$
is equal to the Ruelle operator $L$.
 So, the conditions (H1) and (H2) in Theorem \ref{main} are satisfied  if
 \begin{equation}\label{L-cond}
     \lim_{n\to \infty} \|L^n f\|_\infty =0, \qquad \sum_{n=0}^\infty \|L^n f - L^{n+1} f \|_\infty <\infty
 \end{equation}
 which is weaker than
 \begin{equation}\label{L-cond-s}
     \sum_{n=0}^\infty \|L^n f\|_\infty <\infty.
 \end{equation}
 There are many works done on the decay of $\|L^n f\|_\infty$. Thus
 Theorem \ref{main} applies to a large class of Gibbs measures
and regular functions associated to hyperbolic dynamical systems, for which the condition
(\ref{L-cond-s}) is satisfied (see \cite{Bowen,FJ1,FJ2,PP, Sarig,Young1998, Young1999}).
 We will just recall some of them and check if (\ref{L-cond-s}) is satisfied. As an illustrating example,  we will make a detailed study on expanding endomorphisms on the torus $\mathbb{T}^d$.

  \subsection{Anosov systems}
     Let $T$ be an Anosov diffeomorphism on a closed compact smooth manifold $M$. Assume that $T$
     is topologically mixing. The following facts are true \cite{Bowen, R3}.
     There exists a Markov partition for $(M, T)$ and
     a symbolic dynamical system $(\Sigma_A, \sigma)$ and a map $\Pi: \Sigma_A \to M$ satisfying:\\
       \ \ \indent (1)   $\Pi$ is surjective and H\"{o}lder continuous;\\
       \ \ \indent (2) $\Pi$ is a semi-congugacy, i.e. $T\circ \Pi =\Pi \circ T$\\
       \ \ \indent (3) $\Pi$ is   finite-to-one;\\
        \ \ \indent (4)  For any ergodic measure of full support, $\Pi: M_0 \to \Sigma_A$
          is one-to-one for some $M_0 (\subset M)$ of full measure.

          The statistical study of the dynamics $(M, T)$ is thus converted to that
          of the symbolic dynamics $(\Sigma_A, \sigma)$. It is known that \cite{PP} if $\mu$ is a Gibbs measure associated to a H\"{o}lder continuous potential and if  $f$ is a H\"{o}lder
          continuous function such that $\int f d\mu=0$,  then $\|L^n f\|_\infty$ decays exponentially fast, so that $\{f\circ T^n\}$
           is a convergence system. We give some details in the following case of expanding systems.


 \subsection{Expanding systems}

We say that a dynamical system $T$ on a compact metric space $X$  is {\em locally expanding}
if there are constants $\lambda > 1$ and $b>0$
such that
$$
x,y \in X,\;\; d(x,y)\leq b \Rightarrow d(Tx, Ty) \geq \lambda d(x,y).
$$
It is said to be {\em mixing} if for
any non-empty open set $U$ of $X$, there is an integer $n>0$ such that
$T^{n}(U)=X$.

For any $n \geq 0$, we define a new
metric $d_n$ on $X$, called {\em $n$-Bowen metric}, as
$$
d_n(x, y) = \max_{0\leq j\leq n} d(T^j x, T^j y).
$$
The {\em $n$-Bowen ball} centered at $x \in X$ of  radius $r>0$ is
denoted by $B_n(x, r)$. The $0$-Bowen metric is just the original metric
$d$ on $X$. The $0$-Bowen ball $B_0(x, r)$ will be denoted by $B(x, r)$.

Let ${\mathcal C}={\mathcal C}(X, {\mathbb R})$ be
the space of all continuous functions $\phi: X\rightarrow {\mathbb R}$ with
the
supremum norm
$$
||\phi||_\infty =\max_{x\in X} | \phi (x)|.
$$
For a right continuous and increasing function
$\omega: {\mathbb R}^+ \rightarrow {\mathbb R}^+$
with $\omega (0) = 0$ (called  {\em  modulus of continuity}),
we define
 ${\mathcal H}^{\omega}={\mathcal H}^{\omega}(X, {\mathbb R})$ to be
  the space of all $\omega$-H\"{o}lder functions
$\phi\in {\mathcal C}$, those satisfying
$$
[\phi]_\omega = \sup_{x, y\in X, 0<d(x,y)\leq a} \frac{|\phi (x) -\phi (y)|}
{\omega(d(x,y))} <\infty
$$
(We choose a number $0<a\le b$).
For $\phi \in {\mathcal H}^{\omega}$, we define the norm
$$
    \|\phi\|_{\omega} = [\phi]_\omega + \|\phi\|_\infty.
$$
It is easy to see that $( {\mathcal H}^{\omega}, \|\cdot \|_\omega)$ is a Banach space.

A modulus of continuity $\omega (t)$ is said to satisfy
 {\em Dini condition} if
$$
\int_0^1 \frac{\omega (t)}{t} d t < \infty.
$$
For such a Dini function $\omega$, define
$$
\tilde{\omega}(t)=\sum_{n=1}^{\infty} \omega(\lambda^ {-n}t).
$$
It is easy that $\tilde{\omega}$ is also a
modulus of continuity.

Let ${\mathcal M}$ be the dual space of ${\mathcal C}$ and
let ${\mathcal L}^*: {\mathcal M}\to {\mathcal M}$ be
the adjoint operator of ${\mathcal L}: {\mathcal C}\to {\mathcal C}$.
For any measure $\nu\in {\mathcal M}$ and any function
$\phi\in {\mathcal C}$, we use
$
\langle \nu, \phi\rangle
$
to denote the integral of $\phi$ with respect to $\nu$.

Let us recall the Ruelle theorem  proved in \cite{FJ1}.

\begin{theorem}[Ruelle Theorem]
Suppose that $\omega$ is a Dini modulus of continuity
and $\psi \in {\mathcal H}^\omega$.
The following statements hold:\\
\indent {\rm (1)} \
 There exist a strictly positive number $\rho$ and a
strictly positive function $h \in {\mathcal H}^{\tilde{\omega}} $
such that ${\mathcal L} h = \rho h$.\\
\indent {\rm (2)} \  There exists a unique probability measure $\nu=\nu_{\psi} \in
{\mathcal M}$
such that ${\mathcal L}^*\nu = \rho \nu$.
\\
\indent {\rm (3)} \  For sufficiently small $r_0> 0$,
there is a constant $C=C(r_0)>0$ such that
$$
C^{-1} \leq \frac{\nu\big( B_n(x,r_0)\big)}
                          {\rho^{-n}G_{n}(x)}
\leq C \qquad \hbox{(Gibbs property)}
$$
holds for all $x \in X$ and $n \geq 1$,
where $G_n(x) =\prod_{j=0}^{n-1} \psi(T^jx)$.
\\
\indent {\rm (4)} \  Take $h$ in (1) such that $\langle \nu,h \rangle =1$. Then for any
$\phi\in {\mathcal C}$,
$$
\lim_{n\to \infty} \|\rho^{-n} {\mathcal L}^{n} \phi- \langle \nu, \phi \rangle h \|_\infty =0.
$$
\end{theorem}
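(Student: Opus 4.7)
The plan is to follow the classical transfer-operator scheme, combining a Schauder fixed-point argument with a bounded-distortion estimate tailored to the Dini regularity of the potential. The cornerstone is a \emph{bounded distortion lemma}: given $x \in X$, $n \ge 1$, and two preimages $y, y' \in T^{-n}\{x\}$ that are $r$-close in the $n$-Bowen metric, the ratio $G_n(y)/G_n(y')$ is bounded above and below by constants depending only on $[\log\psi]_\omega$ and $\tilde\omega(r)$. This is precisely where the Dini condition enters: local expansion gives $d(T^j y, T^j y') \le \lambda^{-(n-j)} r$, and summing $\omega$ along the orbit yields $\sum_{k=1}^{n}\omega(\lambda^{-k} r) \le \tilde\omega(r) < \infty$.

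For (2), I would apply the Schauder--Tychonoff theorem to the continuous self-map $\nu \mapsto \mathcal{L}^*\nu/\langle \mathcal{L}^*\nu,1\rangle$ on the weak-$*$ compact convex set of probability measures, producing a fixed point satisfying $\mathcal{L}^*\nu = \rho\nu$ with $\rho := \langle \nu, \mathcal{L} 1\rangle > 0$. For (1), I would show that $\phi_n := \rho^{-n}\mathcal{L}^n 1$ is uniformly bounded above and below and equicontinuous with modulus proportional to $\tilde\omega$. Upper boundedness comes from $\int \phi_n\, d\nu = 1$ together with distortion; lower boundedness uses topological mixing (so that $\mathcal{L}^N 1 \ge c > 0$ for some $N$, an estimate that propagates to all larger $n$); equicontinuity follows by pairing preimages of nearby $x, x'$ via local inverse branches of $T^n$ and invoking the distortion lemma. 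An Arzel\`a--Ascoli extraction (passing to a Ces\`aro average if needed) then yields a strictly positive $h \in \mathcal{H}^{\tilde\omega}$ with $\mathcal{L}h = \rho h$, which we normalize so that $\langle \nu, h\rangle = 1$.

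For (3), the Gibbs property is a change-of-variables consequence of $\mathcal{L}^{*n}\nu = \rho^n\nu$: for $r_0$ small enough that $T^n$ is injective on $B_n(x, r_0)$ and maps it onto a set containing $B(T^n x, r_0)$, the identity $\nu(B_n(x,r_0)) = \rho^{-n}\langle \nu, \mathcal{L}^n \mathbf{1}_{B_n(x,r_0)}\rangle$ reduces to $\rho^{-n} G_n(y_x)$ times factors that are controlled above and below by distortion and by the uniformly comparable $\nu$-mass of balls of radius $r_0$. Finally, for (4), I would pass to the normalized operator $L\phi := \rho^{-1} h^{-1} \mathcal{L}(h\phi)$, which fixes the constant function $1$ and admits the invariant probability $d\mu = h\, d\nu$; the distortion estimate gives a Doeblin--Fortet (Lasota--Yorke) type inequality on $\mathcal{H}^{\tilde\omega}$, and combined with topological mixing this forces $L^n \phi \to \langle \mu, \phi\rangle$ uniformly, yielding the conclusion together with uniqueness of $\nu$ in (2) as a by-product.

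The main obstacle is producing the correct $\tilde\omega$-regularity in step (1): the telescoping bound on $|\phi_n(x) - \phi_n(x')|$ must absorb each term $\omega(\lambda^{-k} r)$ once and only once, so the distortion constant and the matching of inverse branches have to be handled with some care. A secondary delicate point is the uniform positivity of $h$, which requires that the topological mixing time $N$ and the distortion constant jointly produce a lower bound on $\phi_n$ independent of $n$.
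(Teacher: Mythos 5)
The paper itself offers no proof of this theorem: it is imported verbatim from Fan--Jiang \cite{FJ1} ("Let us recall the Ruelle theorem proved in \cite{FJ1}"), so there is no internal argument to compare against. Measured against the actual proof in \cite{FJ1}, your outline for (1)--(3) is essentially the standard and correct scheme: Schauder--Tychonoff for the conformal measure $\nu$, a bounded-distortion lemma in which the Dini condition enters exactly through $\sum_k \omega(\lambda^{-k}r)=\tilde\omega(r)$, Arzel\`a--Ascoli (with Ces\`aro averaging) applied to $\rho^{-n}\mathcal L^n 1$ to produce $h\in\mathcal H^{\tilde\omega}$, and conformality plus distortion for the Gibbs property. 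Two small imprecisions there: the distortion lemma should be stated for preimages of two \emph{nearby} points paired through the same inverse branch (two distinct preimages of the \emph{same} point under $T^n$ lying in one branch would coincide), and in (3) the inclusion $T^nB_n(x,r_0)\supset B(T^nx,r_0)$ needs the openness of $T$ (or an equivalent covering argument), as does the uniform lower bound on $\nu(B(z,r_0))$.

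The genuine gap is in your step (4). You propose a Doeblin--Fortet (Lasota--Yorke) inequality on $\mathcal H^{\tilde\omega}$ and quasi-compactness to force uniform convergence. For a general Dini modulus this mechanism fails: the contraction factor one obtains in the strong seminorm is of the order $\sup_{0<t\le a}\tilde\omega(\lambda^{-n}t)/\tilde\omega(t)$, and since $\tilde\omega(\lambda^{-n}t)$ is merely the tail $\sum_{j>n}\omega(\lambda^{-j}t)$, this supremum tends to $1$ as $t\to 0$ for moduli such as $\omega(t)=|\log t|^{-\alpha}$; no uniform contraction, hence no quasi-compactness, is available. Indeed, quasi-compactness would yield exponential decay of $\|L^n\phi\|_\infty$, contradicting the subexponential rate $(\log n)^{\alpha}/n^{\alpha-1}$ recorded in Theorem \ref{FJ_Thm}, which is sharp in general for such potentials. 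The correct route (the one in \cite{FJ1}) replaces the spectral argument by an equicontinuity-plus-oscillation argument for the normalized operator $L$: the family $\{L^n\phi\}_n$ is uniformly bounded and $\tilde\omega$-equicontinuous, $\max L^n\phi$ is nonincreasing and $\min L^n\phi$ nondecreasing, and topological mixing together with the uniform positivity of $h$ forces the oscillation of $L^n\phi$ to tend to $0$, giving uniform convergence to the constant $\langle\mu,\phi\rangle$ without any rate; uniqueness of $\nu$ then follows as you indicate. As written, your step (4) would not go through.
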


Take an eigenfunction $h$ (associated to the eigenvalue $\rho$)
such that $\langle \nu, h \rangle =1$. The measure $\mu$ such that $d\mu = h d\nu$
is called the {\em Gibbs measure} associated to the potential $\psi$. Let
$$
   \widetilde{\psi}(x) = \psi(x) \frac{h(x)}{\rho h(Tx)}.
$$
The potential $\widetilde{\psi}$ shares the same Gibbs measure as $\psi$.
If $L$ denotes the Ruelle operator associated to
$\widetilde{\psi}$, we have
$$
   L 1 = 1, \quad L^n \phi =\rho^n h \mathcal{L}^n (\phi h^{-1}).
$$
Without loss of generality, we will assume that $\mathcal{L}$ is normalized
in the sense that $\mathcal{L}1 = 1$. The following estimate on $\|L^n \phi\|_\infty$
is proved in \cite{FJ2}.

\vskip10pt
\begin{theorem}\label{FJ_Thm}
Make the same assumptions as in Theorem 1.
Let $\mu$ the Gibbs measure associated to $\psi$. Assume that $L1 = 1$ and $\int \phi d\mu=0$.
If both $\psi$ and $\phi$ are in $\mathcal{H}^\omega$ with $\omega (t) = 1/|\log t|^\alpha$
with $\alpha >1$. Then
$$
\|L^n\phi\|_\infty\leq C\frac{(\log n)^{\alpha}} {n^{\alpha -1}}
\qquad (n \geq 1).
$$
\end{theorem}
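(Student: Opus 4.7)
The plan is to derive a two-point estimate for the iterated normalized Ruelle operator $L^n$, convert it to an $L^\infty$ bound via the mean-zero hypothesis, and then bootstrap. Note that $L 1 = 1$ gives $L^* \mu = \mu$, so $\int L^n \phi\, d\mu = 0$ for every $n$, and the estimates available from the Ruelle theorem apply to $L$ directly.

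First I would derive the two-point estimate. For $x, y \in X$ with $d(x,y) \le b$, the local expansion produces, via the inverse branches of $T^n$, a bijection $u \mapsto v$ between $T^{-n}(x)$ and $T^{-n}(y)$ satisfying $d(T^j u, T^j v) \le \lambda^{j-n} d(x,y)$ for $0 \le j \le n$. Splitting
$$L^n\phi(x) - L^n\phi(y) = \sum_u G_n(u)\bigl(\phi(u)-\phi(v)\bigr) + \sum_u \bigl(G_n(u) - G_n(v)\bigr)\phi(v),$$
the first sum is bounded, using $L^n 1 = 1$ and $\phi \in \mathcal{H}^\omega$, by $[\phi]_\omega\, \omega(\lambda^{-n} d(x,y))$. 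The telescoping $\log G_n(u) - \log G_n(v) = \sum_{j=0}^{n-1}\bigl(\log\psi(T^j u) - \log\psi(T^j v)\bigr)$ combined with $\log\psi \in \mathcal{H}^\omega$ yields $|G_n(u)/G_n(v) - 1| \le C\, \tilde\omega(d(x,y))$, so the second sum is $\le C\|\phi\|_\infty\, \tilde\omega(d(x,y))$. This produces
$$|L^n\phi(x) - L^n\phi(y)| \le C_1 [\phi]_\omega\, \omega(\lambda^{-n} d(x,y)) + C_2 \|\phi\|_\infty\, \tilde\omega(d(x,y)).$$
Since $\int L^n\phi\, d\mu = 0$ on a probability measure we have $\min_X L^n\phi \le 0 \le \max_X L^n\phi$, hence $\|L^n\phi\|_\infty \le \operatorname{osc}_X L^n\phi$; covering the compact $X$ by finitely many balls of radius $b/2$, any two points can be joined by a chain of at most $N$ such balls, so
$$\|L^n\phi\|_\infty \le N\bigl(C_1 [\phi]_\omega\, \omega(\lambda^{-n} b/2) + C_2 \|\phi\|_\infty\, \tilde\omega(b/2)\bigr).$$

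For $\omega(t) = 1/|\log t|^\alpha$ the first term is $O(n^{-\alpha})$ but the second is only $O(1)$, which is far from the claimed rate; the hard part is to bootstrap this into $(\log n)^\alpha / n^{\alpha-1}$. I would apply the preceding estimate to $L^k\phi$ in place of $\phi$ via $L^n\phi = L^{n-k}(L^k\phi)$, noting that $L^k\phi$ still has $\mu$-mean zero and satisfies its own two-point estimate with constants controlled by $[\phi]_\omega$ and $\|\phi\|_\infty$. This gives a recursion of the form $M_n \le C_1'(n-k)^{-\alpha} + C_2' M_k$ with $M_n := \|L^n\phi\|_\infty$; iterating on a geometrically spaced sequence of $k$'s and optimizing the trade-off between the two terms should deliver the announced rate. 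The delicate point — and the main obstacle — is that the constants in the recursion involve the ratio $\tilde\omega/\omega$, which is unbounded as $t \to 0^+$ for the log-Hölder $\omega$, so the iteration must be arranged so that the moduli of continuity are only ever invoked at a fixed positive scale of the form $\lambda^{-(n-k)}b/2$, and the accumulated factors have to be shown to produce only the logarithmic correction $(\log n)^\alpha$ rather than a polynomial blow-up.
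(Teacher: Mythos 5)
First, a point of orientation: the paper does not prove this statement at all --- it is imported verbatim from Fan--Jiang \cite{FJ2} (``The following estimate \dots is proved in \cite{FJ2}''), so there is no in-paper proof to compare against. Judged on its own terms, your first step is sound and standard: pairing the inverse branches of $T^n$, splitting $L^n\phi(x)-L^n\phi(y)$ into a $\phi$-oscillation term and a distortion term, and arriving at
$$
|L^n\phi(x)-L^n\phi(y)|\ \le\ C_1[\phi]_\omega\,\omega(\lambda^{-n}d(x,y))+C_2\|\phi\|_\infty\,\tilde{\omega}(d(x,y))
$$
is exactly the Lasota--Yorke-type inequality underlying the Fan--Jiang analysis, and the passage from oscillation to $\|L^n\phi\|_\infty$ via $\int L^n\phi\,d\mu=0$ is correct.

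The gap is the bootstrap, and it is not a technicality. (i) The recursion $M_n\le C_1'(n-k)^{-\alpha}+C_2'M_k$ is useless unless $C_2'<1$, but your $C_2'$ is a fixed distortion constant times the chaining multiplicity, of the form $NC_2\tilde{\omega}(b/2)$, and nothing forces it below $1$; you never establish the contraction that would close the iteration. (ii) Applying the two-point estimate to $L^k\phi$ needs a bound on $[L^k\phi]_\omega$, and the estimate itself only gives $[L^k\phi]_\omega\lesssim[\phi]_\omega+\|\phi\|_\infty\sup_t\tilde{\omega}(t)/\omega(t)$; for $\omega(t)=|\log t|^{-\alpha}$ one has $\tilde{\omega}(t)\asymp|\log t|^{1-\alpha}$, so $\tilde{\omega}(t)/\omega(t)\asymp|\log t|$ is unbounded. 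You flag this yourself but do not resolve it, and it is precisely where the stated rate comes from: $\tilde{\omega}(\lambda^{-n})\asymp n^{-(\alpha-1)}$, so the exponent $\alpha-1$ and the $(\log n)^\alpha$ correction are the footprint of $\tilde{\omega}$ evaluated at geometrically small scales, which your scheme (which only ever invokes $\tilde{\omega}$ at the fixed scale $b/2$) cannot produce. (iii) As a consistency check, if your recursion could be closed with $C_2'2^{\alpha}<1$, taking $k=n/2$ and iterating would yield $M_n=O(n^{-\alpha})$, strictly stronger than the claimed $(\log n)^{\alpha}/n^{\alpha-1}$ --- a sign that this is not the mechanism at work. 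What is missing is the actual source of decay: a genuine contraction of the global oscillation of $L^n\phi$ obtained from mixing (a Doeblin-type coupling of the preimage weights of two far-apart points, using that some iterate $T^{N_0}$ maps every $a$-ball onto $X$), run in tandem with careful bookkeeping of the growth of the $\omega$-seminorms $[L^k\phi]_\omega$ along the iteration. The two-point estimate alone controls oscillation only at small scales and cannot, by itself, compare values of $L^n\phi$ at distance of order $b$ with an error that decays in $n$.
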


\begin{corollary}
If $\alpha >2$, $\{\phi \circ T^n\}$ is a convergent system in $L^2(\mu)$.
\end{corollary}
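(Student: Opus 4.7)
The plan is to verify that the hypotheses (H1) and (H2) of Theorem~\ref{main} hold for $f=\phi$ under the given assumptions, and then to invoke that theorem directly. Since $\mu = \mu_\psi$ is the Gibbs measure associated to $\psi$, the Perron-Frobenius operator $\widehat{T}_\mu$ coincides with the normalized Ruelle operator $L$, as recalled in the paragraph preceding equation (\ref{L-cond}). By Lemma~\ref{transfer}, $\mathbb{E}(\phi\,|\,T^{-n}\mathcal{B}) = L^n\phi\circ T^n$, and since $\mu$ is $T$-invariant, the $L^\infty(\mu)$-norm of this conditional expectation equals $\|L^n\phi\|_\infty$. Therefore both (H1) and (H2) can be translated into conditions on the decay rate of $\|L^n\phi\|_\infty$, namely the conditions in (\ref{L-cond}).

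First I would apply Theorem~\ref{FJ_Thm} to obtain the quantitative bound
\[
\|L^n\phi\|_\infty \le C\,\frac{(\log n)^\alpha}{n^{\alpha-1}}\qquad (n\ge 2).
\]
This uses the hypotheses that $\psi,\phi\in\mathcal{H}^\omega$ with $\omega(t)=1/|\log t|^\alpha$ and that $\int\phi\,d\mu=0$. In the corollary we are told to consider $\{\phi\circ T^n\}$ as a system in $L^2(\mu)$, which fits the mean-zero setting (and the required regularity is built into the hypotheses carried over from Theorem~\ref{FJ_Thm}).

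Next I would verify the two hypotheses of Theorem~\ref{main}. For $\alpha>2$ (hence a fortiori $\alpha>1$), the right-hand side of the above bound tends to $0$, giving (H1). For (H2), the triangle inequality yields
\[
\|L^n\phi-L^{n+1}\phi\|_\infty \le \|L^n\phi\|_\infty+\|L^{n+1}\phi\|_\infty \le 2C\,\frac{(\log n)^\alpha}{n^{\alpha-1}}
\]
for $n$ large. The series $\sum_n (\log n)^\alpha/n^{\alpha-1}$ converges precisely when $\alpha-1>1$, that is when $\alpha>2$, which is exactly the standing assumption of the corollary. Thus (H2) holds, actually in the stronger form (\ref{L-cond-s}).

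With (H1) and (H2) in hand, Theorem~\ref{main} applied to $f=\phi$ asserts that the ergodic series $\sum a_n\phi(T^nx)$ converges $\mu$-almost everywhere whenever $\sum|a_n|^2<\infty$, which is by definition the statement that $\{\phi\circ T^n\}$ is a convergence system in $L^2(\mu)$. The only quantitative point that really matters is the threshold $\alpha>2$, arising from the Bertrand-type series test; the rest is a routine translation between conditional expectations and iterates of $L$. There is no substantial obstacle since the hard analytic work is already done in Theorem~\ref{FJ_Thm} and in Theorem~\ref{main}.
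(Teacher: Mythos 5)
Your proof is correct and follows essentially the same route the paper intends: the corollary is an immediate consequence of Theorem \ref{FJ_Thm} combined with the remark preceding it that the summability condition (\ref{L-cond-s}) implies (H1) and (H2) (via Lemma \ref{transfer} and the identification $\widehat{T}_\mu = L$), and the Bertrand series $\sum (\log n)^\alpha/n^{\alpha-1}$ converges exactly when $\alpha>2$. The paper leaves these steps implicit; you have simply written them out.
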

We can get a little better in the  special case of expanding endomorphisms on the torus, where $\alpha >1$ will be proved to be a sufficient condition. We will present two direct proofs without using Theorem \ref{FJ_Thm}.

\subsection{Expanding endomorphisms on $\mathbb{T}^d$}
Here we consider a special dynamical system $({\Bbb T}^d, A)$
where ${\Bbb T}^d = {\Bbb R}^d /{\Bbb Z}^d$ ($d\geq 1$) is the
$d$-dimensional torus and $A$ is an endomorphism on ${\Bbb T}^d $.
We suppose that $A$ is {\it expanding}, that is,  all of its
eigenvalues have absolute value strictly larger than $1$.
We take the Haar-Lebesgue  measure  $\mu = dx$ on ${\Bbb T}^d$.
The system $({\Bbb T}^d, A, \mu )$ is strong mixing. For an integral function
$f$ defined on ${\Bbb T}^d$ with $\int f d\mu =0$, we consider the
general lacunary series $$
\sum_{n=0}^\infty a_n f(A^n x).$$ The (classical) lacunary
series correspond to the case where $f$ is a group character of ${\Bbb T}^d$, i.e. an exponential function.
We are going to present two methods, one of which uses tilings and the other uses Fourier analysis.

We need some facts on affine tilings and
we refer to \cite{GM,LW} for the facts recalled below and for
further information. Given a measurable set $T \subset {\Bbb R}^d$,  $|T|$ denotes its Lebesgue measure.
Given two measurable sets $T$ and $S$,
the notation $T \simeq S$ means that $T$ and $S$ are equal
up to a set of  null Lebesgue measure.  An endomorphism of the torus is represented by an integral matrix.
Suppose $A$ is a $d \times d$ integral matrix which is {expanding},
that is,  all
of its eigenvalues $\lambda_i$ have $|\lambda_i|>1$. Denote
$\lambda = \inf |\lambda_i|$ and
$q = |{\rm det} A|$ ($q\geq 2$ and is an integer).

Take a digit set $D$ consisting of
representatives
  of
cosets in ${\Bbb Z}^d/A {\Bbb Z}^d$.
For each $\gamma \in D$, define the contraction map
$S_\gamma : {\Bbb R}^d \rightarrow {\Bbb R}^d$
by
     $$
       S_\gamma x = A^{-1} (x + \gamma).
   $$
There exists a unique compact set $T$
having the self-affinity
    $$
        T = \bigcup_{\gamma \in D} S_\gamma(T).
    $$
When $\gamma' \not= \gamma''$, $|S_{\gamma'}(T) \bigcap S_{\gamma''}(T)|=0$.  Therefore the self-affinity
implies
      \begin{equation}\label{Tiling1}
      \sum_{k \in D} 1_T(Ax -k) = 1_T(x)  \qquad {\rm a.e.}
   \end{equation}
It is also known that the compact set $T$ has  the tiling property
   \begin{equation}\label{Tiling2}
      \sum_{k \in {\Bbb Z}^d} 1_T(x -k) = 1 \qquad \qquad {\rm a.e.}
   \end{equation}
Since $T$ satisfies (\ref{Tiling1}) and (\ref{Tiling1}), we say it
generates an {\em integral self-affine tiling}.
The compact tile $T$ has the property $|T|=1$.
Here $T$ representing a tile is not to be confused with the meaning of dynamics in
the previous sections. Our dynamics is now represented by the matrix $A$.

The tiling property allows us to identify ${\Bbb T}^d$
with $T$ up to a null measure set. The self-affinity
allows us to decompose $T$ into $q$ disjoint (up to a
null measure set)  self-affine parts.

For a function $f$ defined on the torus,  its
modulus of continuity
is defined as
$$
    \Omega_f(\delta) = \sup_{|x-y|\leq \delta} |f(x) - f(y)|.
$$

\begin{theorem} If $f: {\Bbb T}^d \to \mathbb{C}$ is a Dini continuous function in the sense that
$\int_0^1 \Omega_f(t) \frac{dt}{t}<\infty$ such that $\int f(x) dx =0$, then
$\{f(A^n x)\}_{n\ge 0}$ is a convergence system in $L^2(\mathbb{T}^d)$.
\end{theorem}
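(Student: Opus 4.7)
The plan is to verify hypotheses (H1) and (H2) of Theorem \ref{main} for the system $(\mathbb{T}^d, A, dx)$ and invoke that theorem. Because Haar measure is $A$-invariant, the Perron--Frobenius operator has the explicit averaging form
\[
   L\phi(x) = \frac{1}{q}\sum_{Ay = x} \phi(y), \qquad L^n\phi(x) = \frac{1}{q^n}\sum_{A^n y = x}\phi(y),
\]
and by Lemma \ref{transfer} we have $\mathbb{E}(f \mid A^{-n}\mathcal{B}) = L^n f \circ A^n$. Both (H1) and (H2) therefore reduce to establishing the stronger condition (\ref{L-cond-s}), namely $\sum_n \|L^n f\|_\infty < \infty$.

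To estimate $\|L^n f\|_\infty$ I would use the self-affine tiling. Identifying $\mathbb{T}^d \simeq T$ via (\ref{Tiling2}) and iterating the self-affinity (\ref{Tiling1}), one partitions $T$ (up to null sets) into $q^n$ sub-tiles $T_\gamma = S_{\gamma_1}\circ \cdots \circ S_{\gamma_n}(T)$ indexed by words $\gamma \in D^n$. Each $T_\gamma$ has measure $q^{-n}$, contains exactly one $A^n$-preimage of every $x$, and has diameter bounded by $\|A^{-n}\|\cdot \operatorname{diam}(T)$. Replacing the Euclidean norm on $\mathbb{R}^d$ by a norm adapted to $A$ (standard linear algebra, absorbing polynomial Jordan-block factors), one gets $\|A^{-n}\| \le r^{-n}$ for some $r > 1$ arbitrarily close to $\lambda$.

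With this geometry, the zero-mean hypothesis $\int_T f = 0$ lets me rewrite
\[
   L^n f(x) \;=\; \frac{1}{q^n}\sum_{\gamma \in D^n} f(y_\gamma) \;=\; \sum_{\gamma \in D^n} \int_{T_\gamma}\bigl[f(y_\gamma) - f(z)\bigr]\,dz,
\]
where $y_\gamma \in T_\gamma$ is the $A^n$-preimage of $x$ lying in $T_\gamma$. The integrand is dominated pointwise by $\Omega_f(\operatorname{diam} T_\gamma)$, so $\|L^n f\|_\infty \le \Omega_f(Cr^{-n})$. A routine dyadic discretization of $\int_0^1 \Omega_f(t)/t\,dt$ against the geometric sequence $\{r^{-n}\}$ (using the monotonicity of $\Omega_f$) shows the Dini hypothesis is equivalent to $\sum_n \Omega_f(r^{-n}) < \infty$, whence $\sum_n \|L^n f\|_\infty < \infty$. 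Theorem \ref{main} then delivers the convergence system property.

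The step requiring the most care is the geometric one: controlling the shrinking of every sub-tile $T_\gamma$ uniformly at an exponential rate. This is transparent when $A$ is diagonalizable with equal eigenvalue moduli, but complex eigenvalues or Jordan blocks force one to pass to an adapted norm in which $A$ acts as a strict expansion by a factor close to $\lambda$; no additional difficulty arises beyond this. The remaining analytic steps are standard.
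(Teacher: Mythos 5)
Your proposal is correct and follows essentially the same route as the paper: both reduce (H1)--(H2) to $\sum_n \|L^n f\|_\infty < \infty$ via Lemma \ref{transfer}, decompose the tile $T$ into the $q^n$ sub-tiles $T_\gamma$, exploit $\int f = 0$ by subtracting the tile averages, bound the result by $\Omega_f(\operatorname{diam} A^{-n}T)$, and sum against the Dini integral. Your remark about passing to an adapted norm to obtain a clean exponential contraction rate (absorbing Jordan-block factors) is a small technical refinement that the paper states without comment, but it does not alter the argument.
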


\begin{proof} For our dynamics $(\mathbb{T}^d, \mathcal{B}(\mathbb{T}^d), A, \mu)$
where $\mu$ is the Lebesgue measure (the Gibbs measure associated to the constant potential $\psi\equiv 1$), the Ruelle operator is defined by
     $$
     L f(x) = \frac{1}{q}
          \sum_{\gamma \in D}
           f\left(A^{-1}(x + \gamma ) \right).
     $$
     By Lemma \ref{transfer}, we have
     $\mathbb{E} (\phi|A^{-k}\mathcal{B})= L^k\phi \circ A^k$. In order to apply
     Theorem \ref{main}, it suffices to show that
     \begin{equation}\label{A-estimate}
     \|L^k f\|_\infty \le C \Omega_f(\lambda^{-n})
\end{equation}
where $C>0$ is a constant and $\lambda$ is the least modulus of all eigenvalues of $A$,
because
$$
   \sum_{i=0}^\infty \left\| \mathbb{E} (f|A^{-i}\mathcal{B})\right\|_\infty
   =
   \sum_{i=0}^\infty \left\|L^i f\right\|_\infty
   \le   C'\int_0^1\frac{\Omega_f(t)}{t} dt <\infty.
$$

Let us prove (\ref{A-estimate}).
For $\gamma=(\gamma_1, \cdots, \gamma_n) \in D^n$, write
$$
     S_\gamma x = S_{\gamma_n}\circ S_{\gamma_2}\circ
     \cdots \circ S_{\gamma_1}x,
     \qquad
     T_\gamma = S_\gamma(T).
$$
Clearly
$$
     S_\gamma x  = A^{-n}x +A^{-n}\gamma_1 +\cdots + A^{-2}\gamma_{n-1}
          + A^{-1} \gamma_n.
$$
Denoting $b_\gamma =S_\gamma 0$, we get
$$
   L^n f (x) = \frac{1}{q^n }
                \sum_{\gamma \in D^n}
                f\left( A^{-n}x   + b_\gamma \right).
$$

For $\gamma \in D^n$,
write
$$
     f_\gamma = \frac{1}{|T_\gamma|}\int_{T_\gamma} f(x) d x.
$$
Note that $|T_\gamma| = q^{-n} |T|= q^{-n}$ and that
$$
    \sum_{\gamma \in D^n} f_\gamma =  q^{n} \int_T f
    = q^n \int_{{\Bbb T}^d} f=0.
$$
Then
\begin{eqnarray*}
       L^n f (x)
        =     \frac{1}{q^n} \sum_{\gamma \in D^n}
               f\left(
                       A^{-n} x + b_\gamma \right)
        =    \frac{1}{q^n} \sum_{\gamma \in D^n}
               \left[
               f\left(
                       A^{-n} x + b_\gamma \right)
                      - f_\gamma
               \right].
\end{eqnarray*}
Since $T_\gamma = A^{-n}T +b_\gamma$, we have immediately
$$
       |L^n f (x)|
       \leq  \frac{1}{q^n} \sum_{\gamma \in D^n} \Omega_f({\rm diam}\
       A^{-n}T )
          \leq  C \Omega_f(\lambda^{-n})
$$
where ${\rm diam } B$ denotes the diameter of a set $B$.
We used the fact that ${\rm diam} A^{-n} T \leq a \lambda^{-n}$
for some $a>0$ and the fact that
 $\Omega_f(2\delta) \leq 2 \Omega_f(\delta)$.
The estimate on $\|L^n f\|_\infty$ is thus proved.
\end{proof}

If $A$ is not expanding but hyperbolic, it belongs to the class of Anosov systems.
\medskip

The second methods uses the relation between the Ruelle operator and the
Fourier transform.

\begin{proposition}\label{P_TF}   Let $A^*$ be the transposed matrix of $A$. For $f \in L^1(\mathbb{T}^d)$, we have
\begin{equation}\label{Ruelle-Fourier}
    L^n f(x)
     =   \sum_{k \in {\Bbb Z}^d}
                       \hat{f}(A^{*n} k) e^{2 \pi i
                          \langle k, x\rangle
                                    }
         \qquad (\forall n \geq 1).
\end{equation}
\end{proposition}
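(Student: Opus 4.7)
The plan is to verify the formula by computing the Fourier coefficients of $L^n f$ on both sides and checking they agree. Since every $f \in L^1(\mathbb{T}^d)$ is determined (in the sense of its Fourier series) by its Fourier coefficients, it suffices to show that
\[
\widehat{L^n f}(k) = \hat{f}(A^{*n} k) \qquad \forall k \in \mathbb{Z}^d,
\]
after which the right-hand side of (\ref{Ruelle-Fourier}) is literally the Fourier series of $L^n f$.

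First I would handle the case $n=1$. By Lemma \ref{transfer} (or directly from the definition of the Perron-Frobenius operator recalled earlier in the paper), $L = \widehat{A}_\mu$ is the adjoint of composition with $A$ relative to the Lebesgue measure $\mu$, that is,
\[
\int_{\mathbb{T}^d} g \cdot Lf \, dx = \int_{\mathbb{T}^d} (g \circ A) \cdot f \, dx \qquad \forall g \in L^\infty(\mathbb{T}^d).
\]
Taking the test function $g(x) = e^{-2\pi i \langle k, x\rangle}$ and using $\langle k, A x\rangle = \langle A^* k, x\rangle$, I get
\[
\widehat{Lf}(k) = \int e^{-2\pi i \langle k, x\rangle} Lf(x) \, dx = \int e^{-2\pi i \langle A^* k, x\rangle} f(x) \, dx = \hat{f}(A^* k).
\]
One can also verify this more concretely from the explicit formula $Lf(x) = q^{-1}\sum_{\gamma \in D} f(A^{-1}(x+\gamma))$ by changing variables on each branch and using that $D$ is a complete system of coset representatives of $\mathbb{Z}^d/A\mathbb{Z}^d$, which collapses the sum over $\gamma$ to a single integral over $\mathbb{T}^d$; the adjoint computation above is cleaner.

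Then I would iterate: applying the $n=1$ identity $n$ times gives $\widehat{L^n f}(k) = \widehat{L^{n-1}f}(A^* k) = \cdots = \hat{f}(A^{*n} k)$. Substituting into the Fourier expansion of $L^n f$ yields the formula (\ref{Ruelle-Fourier}). The one point to be careful about is the mode of convergence: for general $f \in L^1(\mathbb{T}^d)$ the identity holds as equality of Fourier series (equivalently, in the sense of distributions or by C\'esaro summation), while for sufficiently regular $f$ (for instance Dini continuous, as considered in the preceding theorem, which makes $\hat{f}(A^{*n} k)$ summable in $k$ once $n$ is large enough because $A^*$ is expanding) the series converges absolutely and uniformly. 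There is no real obstacle here; the content of the proposition is essentially the Fourier-side translation of the adjoint relation defining $L$.
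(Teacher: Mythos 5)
Your proof is correct, but it takes a genuinely different route from the paper's. The paper argues entirely on the frequency side of $f$: it splits the lattice $\mathbb{Z}^d$ into the cosets $A^*\mathbb{Z}^d+\beta$, $\beta\in D^*$, feeds the resulting decomposition of the Fourier series of $f$ into the explicit branch-sum formula $Lf(x)=q^{-1}\sum_{\gamma\in D}f(A^{-1}(x+\gamma))$, and then eliminates every coset with $\beta\not\equiv 0$ by establishing the character-sum orthogonality $q^{-1}\sum_{\gamma\in D}e^{2\pi i\langle A^{*-1}\beta,\gamma\rangle}\in\{0,1\}$ over the digit set $D$. You instead test $Lf$ against the characters $e^{-2\pi i\langle k,\cdot\rangle}$ and use the adjoint relation defining the Perron--Frobenius operator together with $\langle k,Ax\rangle=\langle A^*k,x\rangle$ to read off $\widehat{Lf}(k)=\hat f(A^*k)$ in one line, then iterate. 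Your argument is shorter and conceptually cleaner---it makes visible that the proposition is just the Fourier transcription of the duality between $L$ and the Koopman operator $g\mapsto g\circ A$ with respect to Lebesgue measure---at the price of invoking the identification of $L$ with $\widehat{A}_\mu$, which the paper asserts in Section 4 but does not prove; the paper's computation is self-contained and in effect proves that identification along the way, since the orthogonality relation it verifies is exactly the Fourier-side expression of the $A$-invariance of Lebesgue measure (your sketch of the change of variables over the branches fills the same gap). Both proofs share the same harmless looseness in reading (\ref{Ruelle-Fourier}) as an identity of Fourier series for general $f\in L^1$, and your explicit remark on this is welcome; the one quibble is your parenthetical claim that Dini continuity alone makes $\sum_k|\hat f(A^{*n}k)|$ finite, which is not obviously true and is not needed anywhere (the paper only ever applies the proposition to functions with absolutely convergent Fourier series).
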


\begin{proof}
It suffices to prove the expression (\ref{Ruelle-Fourier}) for $n=1$.
Take a digit set $D^*$ representing ${\Bbb Z}^d /A^*{\Bbb Z}^d $.
Assume that $0 \in D^*$.
Write
$$
   f(x) = \sum_{k \in {\Bbb Z}^d} \hat{f}(k) e^{2 \pi i \langle k, x\rangle}
        = \sum_{k \in {\Bbb Z}^d} \sum_{\beta \in D^*}
            \hat{f}(A^*k+\beta) e^{2 \pi i \langle A^*k+ \beta, x\rangle}.
$$
We have
$$
   f(A^{-1}(x+\gamma))
        = \sum_{k \in {\Bbb Z}^d} \sum_{\beta \in D^*}
            \hat{f}(A^* k+ \beta) e^{2 \pi i
                          \langle A^* k+\beta,\  A^{-1}(x+\gamma)\rangle}
$$
then
\begin{eqnarray*}
    L f(x)
    & = & \frac{1}{q} \sum_{k \in {\Bbb Z}^d}
                      \sum_{\beta \in D^*}
                      \sum_{\gamma \in D}
                      \hat{f}(A^* k+ \beta) e^{2 \pi i
                          \langle k+ A^{*-1}\beta, \ x+\gamma\rangle
                                    }  \\
    & = & \frac{1}{q} \sum_{k \in {\Bbb Z}^d}
                                e^{2 \pi i \langle k,\ x\rangle}
                      \sum_{\beta \in D^*}
                           \hat{f}(A^* k+ \beta)
                        e^{2 \pi i \langle A^{*-1}\beta,\ x\rangle}
                      \sum_{\gamma \in D}
                       e^{2 \pi i
                          \langle A^{*-1}\beta, \ \gamma\rangle
                                    }
\end{eqnarray*}
So, in order to get (\ref{Ruelle-Fourier}) with $n=1$,
 it suffices to note that
$$
    \frac{1}{q}   \sum_{\gamma \in D}
               e^{2 \pi i  \langle A^{*-1}\beta,\ \gamma\rangle}
               = \left\{ \begin{array}{ll}
                              1 & \quad {\rm if} \ \
                                    \beta =0  \ \ (\!\!\!\!\mod A^*{\Bbb Z}^d)\\
                              0 & \quad {\rm if} \  \
                                    \beta \not=0 \ \ (\!\!\!\!\mod A^*{\Bbb Z}^d).
                        \end{array}
               \right.
$$
In fact, suppose the above sum is not zero.
Since the group $D$
is a product of cyclic groups and
$m^{-1}\sum_{j=0}^{m-1} e^{2 \pi i j x} = 1 $ or $0$ for a real number $x$
according to  $x\in {\Bbb Z}$ or not, 
$\langle A^{*-1}\beta,\ \gamma'\rangle$ must an integer for any cyclic
factor group generator $\gamma'$. Then
$\langle A^{*-1}\beta,\ \gamma\rangle$  is an integer for
any $\gamma \in D$. Let $z \in {\Bbb Z}^d$. Write
$z = \gamma + A k$ with  $\gamma \in D$ and $k \in {\Bbb Z}^d$.
Then
$$
    \langle A^{*-1}\beta,\ z\rangle  =
      \langle A^{*-1}\beta,\ \gamma\rangle
      +  \langle \beta,\ k \rangle = 0    \quad{\rm (mod \ {\Bbb Z})}.
$$
It follows that $\beta = 0$ (mod $A^*{\Bbb Z}^d$).
\end{proof}

This proposition allows us to check the conditions (H1) and (H2) by using conditions on the Fourier
coefficients of $f$. For simplicity,
let us only consider  the dynamics $T: [0, 1)\to [0, 1)$
defined by $T x = q x$ $\!\!\mod 1$ where $q\ge 2$ is an integer.
For simplicity, we write $\mathbb{E}^k (\cdot) = \mathbb{E}(\cdot | T^{-k}\mathcal{B})$.
For an integrable function $f$, by Proposition \ref{P_TF} and Lemma \ref{Lem*} we have
$$
    \mathbb{E}^k f = \sum_{m \in \mathbb{Z}} \widehat{f} (q^k m) e^{2\pi i q^k m x}=\sum_{q^k | n } \widehat{f} (n) e^{2\pi i n x}.
$$
Since $q^{k+1}|n$ implies $q^{k}|n$, we have
$$
  d_k(f)= \mathbb{E}^k f - \mathbb{E}^{k+1} f =\sum_{q^k | n, q^{k+1} \not{\mid} \  n } \widehat{f} (n) e^{2\pi i n x}.
$$
Therefore if the Fourier series of $f$  converges absolutely and $\widehat{f}(0)=0$, we have $\lim_{k\to \infty}  \mathbb{E}^k f=0$ and
$$
   \sum_{k=0}^\infty \left\|\mathbb{E}^k f - \mathbb{E}^{k+1} f\right\|_\infty
   \le \sum_{k=0}^\infty \sum_{q^k | n, q^{k+1} \not | n } |\widehat{f} (n)|\le \|f\|_{A(\mathbb{T})}
   <\infty
$$
where $\|f\|_{A(\mathbb{T})} = \sum_{n\in \mathbb{Z}} |\widehat{f}(n)|$.
We conclude by the following statement.

 \begin{proposition}
 If $f$ is the sum of an absolutely convergent Fourier series such that $\widehat{f}(0)=0$,
then $\{f(q^n x)\}$ is a convergence system in $L^2([0, 1])$.
\end{proposition}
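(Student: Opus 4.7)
The plan is to invoke Theorem \ref{main} with $T x = qx \bmod 1$ and verify that conditions (H1) and (H2) hold for the given $f$. The key computation is already set up in the paragraphs immediately preceding the statement: using Proposition \ref{P_TF} together with Lemma \ref{transfer} (and the identification $\mathbb{E}^k f = \widehat{T}^k f \circ T^k$), the conditional expectations take the clean form
$$
\mathbb{E}^k f(x) = \sum_{q^k \mid n} \widehat{f}(n)\, e^{2\pi i n x},
$$
so the martingale differences are
$$
d_k(f)(x) = \sum_{q^k \mid n,\ q^{k+1} \nmid n} \widehat{f}(n)\, e^{2\pi i n x}.
$$

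First I would verify (H1). Since $\widehat{f}(0)=0$ and the Fourier series converges absolutely, the tail estimate
$$
\|\mathbb{E}^k f\|_\infty \le \sum_{q^k \mid n,\ n \neq 0} |\widehat{f}(n)|
$$
tends to $0$ as $k \to \infty$, because the set $\{n \neq 0 : q^k \mid n\}$ shrinks to $\emptyset$ and $\sum_n|\widehat{f}(n)| < \infty$ allows dominated convergence on $\mathbb{Z}\setminus\{0\}$. This gives (H1), in fact in $L^\infty$, which is stronger than needed.

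Next I would verify (H2). The triangle inequality gives
$$
\|d_k(f)\|_\infty \le \sum_{q^k \mid n,\ q^{k+1} \nmid n} |\widehat{f}(n)|,
$$
and the crucial observation is that the index sets $A_k := \{n \in \mathbb{Z} : q^k \mid n,\ q^{k+1} \nmid n\}$ for $k \ge 0$ are pairwise disjoint and their union is exactly $\mathbb{Z} \setminus \{0\}$ (every nonzero integer has a unique largest power of $q$ dividing it). Therefore
$$
\sum_{k=0}^\infty \|d_k(f)\|_\infty \le \sum_{k=0}^\infty \sum_{n \in A_k} |\widehat{f}(n)| = \sum_{n \neq 0} |\widehat{f}(n)| = \|f\|_{A(\mathbb{T})} < \infty,
$$
which is hypothesis (H2) (noting that finiteness of the sum of $\|d_k(f)\|_\infty$ forces finiteness of $\sum \|d_k(f)\|_\infty$, and thus a fortiori the condition of Theorem \ref{main}).

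Finally, since $(\mathbb{T},\mathcal{B},T,\mu)$ with $Tx = qx$ is ergodic for Lebesgue measure $\mu$, Theorem \ref{main} applies directly: for any $(a_n) \in \ell^2$ the series $\sum_n a_n f(q^n x)$ converges $\mu$-a.e., which is exactly the statement that $\{f(q^n x)\}$ is a convergence system in $L^2([0,1])$. There is no real obstacle here; the only point that requires a moment of care is the disjoint partition of $\mathbb{Z}\setminus\{0\}$ by the sets $A_k$, which is what turns the absolute convergence of the Fourier series into the summability of $\|d_k(f)\|_\infty$.
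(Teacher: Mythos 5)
Your proof is correct and follows essentially the same route as the paper: both compute $\mathbb{E}^k f$ and $d_k(f)$ via Proposition \ref{P_TF} and Lemma \ref{Lem*}, use the partition of $\mathbb{Z}\setminus\{0\}$ by the sets where $q^k$ exactly divides $n$ to bound $\sum_k\|d_k(f)\|_\infty$ by $\|f\|_{A(\mathbb{T})}$, and then invoke Theorem \ref{main}. No discrepancies worth noting.
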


See \cite{Aistleitner2013, BW} for the study of general series $\sum a_k f(n_k x)$ with respect to Lebesgue measure.


\section{Differentiability of Weierstrass type functions}
In this last section, we give an application of Theorem \ref{main} to the study of the
differentiability of a class of generalized Weierstrass  functions.

Let $f \in C^1(\mathbb{T})$. Let $(a_n)$ be a sequence of numbers
such that $a_n \to 0$ as $n\to \infty$. We consider the continuous function
\begin{equation}\label{F}
      F(x) = \sum_{n=0}^\infty a_n 3^{-n} f(3^n x) \qquad (x\in \mathbb{T})
\end{equation}
and we would like to study its differentiability. If $f(x)=e^{2\pi i x}$ and $a_n =1$ for all $n$,
we get the famous Weierstrass function $\sum_{n=0}^\infty 3^{-n} e^{2\pi i 3^n x}$ which is nowhere differentiable (see \cite{Katznelson}, p. 106). For the modified
Weierstrass function $\sum_{n=1}^\infty a_n 3^{-n} e^{2\pi i 3^n x}$ to be differentiable somewhere, it is then necessary to assume that $a_n \to 0$.

In order to be able to apply our main theorem, we assume that $\{f'(3^n x)\}$ is a Riesz system in $L^2(\mathbb{T})$.
Such functions do exist. We identify $\mathbb{T}$ with $[0, 1)$. Hedenmalm, Lindqvist and Seip \cite{HLS} had characterized
all functions $\varphi$ such that $\{\varphi(nx)\}_{n\ge 1})$ is a Riesz basis of $L^2([0, 1])$.
So we can take $f$ such that $f'=\varphi$. Let us recall the criterion of Hedenmalm, Lindqvist and Seip. Since $\{\sqrt{2}\sin \pi n x\}_{n\ge 1}$ is an orthonormal basis of
$L^2([0, 1])$, any square integrable function $\varphi$ can be developed as follows
$$
         \varphi(x) = \sum_{n=1}^\infty c_n \sqrt{2}\sin \pi n x.
$$
Hedenmalm, Lindqvist and Seip proved that $\{\varphi(nx)\}_{n\ge 1}$ is a Riesz basis iff the Dirichlet
series $\sum_{n=1}^\infty c_n n^{-s}$ defines a function which is analytic and bounded away from zero and the infinity in the right half-plane $\mbox{Re}\, s >0$. In particular, it is the case when $c_n$
is totally multiplicative and $\sum_{p} |c_p|<\infty$ (summation over all primes). Thus the following are good examples
$$
    \varphi_\tau(x) = \sqrt{2} \sum_{n=1} \frac{\sin \pi n x}{n^\tau}\qquad (\tau >1).
$$
Another sufficient condition presented in \cite{HLS} is $\sum_{n=2}^\infty |c_n|<c_1=1$.
For example, $\varphi(x) = \sin \pi x + c_3 \sin 3\pi x$ with $|c_3|<1$ is a good example. But $c_3= -1$
produces a bad one. In the terminology of dynamical system, $\sin \pi x - \sin 3\pi x$
is a coboundary for the dynamics $x \mapsto 3x$ $\mod 1$.

The next proposition describes the differentiability of $F$ at a given point by the convergence of the formal
derivative series at that point. Thus the study of differentiability becomes the study of convergence.

\begin{proposition} \label{Diff-Cov} Let $F$ be the function defined by (\ref{F}), where $a_n\to 0$ and $f\in C^{1+\delta}(\mathbb{T})$ for some $\delta >0$.
Let $x \in \mathbb{T}$ be fixed. The function $F$  is differentiable at the point $x$ iff the series
    $
       \sum_{n=0}^\infty a_n f'(3^n x)
    $
    converges at $x$. In this case, we have
    \begin{equation}\label{F'}
         F'(x) = \sum_{n=0}^\infty  a_n f'(3^n x).
    \end{equation}
\end{proposition}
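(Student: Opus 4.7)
The plan is to analyze the finite difference quotient
\[
\frac{F(x+h) - F(x)}{h} = \sum_{n=0}^\infty a_n\, 3^{-n}\,\frac{f(3^n(x+h)) - f(3^n x)}{h}
\]
by splitting the series at a cutoff $N = N(h)\to\infty$ chosen so that $3^N|h|$ stays of order one. I would show that the low-frequency block ($n\le N$) produces $G_N(x) := \sum_{n=0}^N a_n f'(3^n x)$ up to a vanishing error, and that the high-frequency tail ($n > N$) goes to zero. Both implications of the proposition then fall out of this decomposition.

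For the low-frequency block I would invoke the H\"older regularity of $f'$. Since $f \in C^{1+\delta}$, one has the elementary estimate
\[
\left|\frac{f(y+k)-f(y)}{k} - f'(y)\right| \le \frac{[f']_\delta}{1+\delta}\, |k|^\delta,
\]
so each summand equals $a_n f'(3^n x) + O(|a_n|\,(3^n|h|)^\delta)$. Summing over $n\le N$ and using $3^n|h| \le C\cdot 3^{-(N-n)}$, the cumulative error is controlled by $C\sum_{n=0}^N |a_n|\, 3^{-(N-n)\delta}$. This quantity tends to zero as $N\to\infty$ thanks to $a_n \to 0$: given $\epsilon > 0$, fix $K$ with $|a_n|<\epsilon$ for $n > K$ and split the sum at $n=K$ to bound it by $\epsilon \sum_{k\ge 0} 3^{-k\delta}$ plus a remainder of order $3^{-(N-K)\delta}$.

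The treatment of the tail is where the two directions diverge. In the \emph{only if} direction I would specialize to the lacunary sequence $h_N = 3^{-N}$; since $f$ is $1$-periodic and $3^n h_N = 3^{n-N}\in\mathbb{Z}$ for $n\ge N$, every high-frequency term vanishes \emph{identically}, leaving
\[
\frac{F(x+h_N) - F(x)}{h_N} = G_{N-1}(x) + o(1).
\]
If $F$ is differentiable at $x$, the left-hand side converges to $F'(x)$ along this subsequence, forcing $G_N(x)\to F'(x)$, which yields both the convergence of the series and the identity (F'). For the \emph{if} direction I would treat a general $h\to 0$: the crude bound $|f(3^n(x+h)) - f(3^n x)|\le 2\|f\|_\infty$ dominates the tail by $C|h|^{-1}\sum_{n>N}|a_n|\, 3^{-n}$, and since $3^N|h|\ge 1$ this is at most a constant multiple of $\sup_{n>N}|a_n|$, which tends to zero. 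Combined with the low-frequency analysis this gives $\lim_{h\to 0}(F(x+h)-F(x))/h = \sum_{n=0}^\infty a_n f'(3^n x)$.

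The only genuine obstacle is the low-frequency error bound $\sum_{n=0}^N |a_n|\,3^{-(N-n)\delta}\to 0$: the geometric weights concentrate near the endpoint $n = N$, so its decay must come from the hypothesis $a_n\to 0$ rather than from any boundedness or summability. This is precisely why $a_n\to 0$ is indispensable, mirroring the classical fact that non-vanishing $(a_n)$ produces a nowhere differentiable Weierstrass function.
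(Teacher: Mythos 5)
Your proof is correct, and while it shares the paper's basic skeleton (split the difference quotient at a cutoff $N(h)$, expand the low block via the H\"older estimate $f(y+k)-f(y)=f'(y)k+O(|k|^{1+\delta})$, crudely bound the tail), the way you extract the two implications is genuinely different. The paper uses a single balanced choice of $N$, namely $\sqrt{a_N^*}\,3^{-N}\le |h|\le \sqrt{a_{N-1}^*}\,3^{-N+1}$ with $a_N^*=\max_{n>N}|a_n|$, which forces $3^N|h|\to 0$ and $a_N^*\,|3^Nh|^{-1}\to 0$ simultaneously; this yields the two-sided equivalence $\lim_h (F(x+h)-F(x))/h=\lim_N\sum_{n\le N}a_nf'(3^nx)$ in one stroke, at the cost of the extra remark that every $N$ is realized by some $h$ (needed for the ``only if'' direction) and with the low-block error controlled by mere boundedness of $(a_n)$. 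You instead decouple the two directions: for ``only if'' you take $h_N=3^{-N}$ and observe that $3^nh_N\in\mathbb{Z}$ for $n\ge N$ kills the tail \emph{exactly} by periodicity --- a clean device the paper does not use --- and for ``if'' you take $3^N|h|\asymp 1$, which makes the tail bound immediate but pushes the burden onto the low-block error $\sum_{n\le N}|a_n|3^{-(N-n)\delta}$, correctly resolved by your split-at-$K$ argument using $a_n\to 0$. Both routes are sound; yours isolates more transparently where $a_n\to 0$ enters, while the paper's single choice of scale ($3^N|h|\asymp\sqrt{a_N^*}$, a geometric mean of the two regimes) is more economical but less self-explanatory.
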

\begin{proof}
For any integer $N\ge 1$ and any non zero number $h$, we have
$$
   \frac{F(x+h) - F(x)}{h}
       = \sum_{n=1}^N a_n \frac{f(3^n (x+h))- f(3^n x)}{3^nh } \ + \ O\left(h^{-1}\sum_{n=N+1}^\infty a_n 3^{-n}\right).
$$
Notice that $a_N^* := \max_{n\ge N+1}|a_n|$ tends to zero as $N$ tends to the infinity
and that
$\sum_{n=N+1}^\infty a_n 3^{-n}\le 3^{-N}a_N^*$. Since $f'$ is $\delta$-H\"{o}lder,  by the mean value theorem we have
 $$
      f(b) - f(a) = f'(a) (b-a) + O(|b-a|^{1+\delta})
 $$
 for all $a$ and $b$, where the constant in $O(1)$ is independent of $a$ and $b$. Consequently
$$
    f(3^n (x+h))- f(3^n x)= f'(3^n x)3^n h + O(|3^n h|^{1+\delta}).
$$
Notice that $a_n$ is bounded and $\sum_{n=0}^N 3^{\delta n}$ is of size $3^{\delta N}$. Now we conclude that
$$
   \frac{F(x+h) - F(x)}{h}
       = \sum_{n=0}^N a_n f'(3^n x) + O\left(|3^N h|^\delta\right)
       +O\left(  a_N^* |3^N h|^{-1} \right).
$$
Observe that $\sqrt{a_n^*}3^{-n}$ decreases to zero. Then
for any fixed small $h$, we can choose an integer $N$ such that
\begin{equation}\label{DD}
         \sqrt{a_N^*}3^{-N}\le |h| \le  \sqrt{a_{N-1}^*}3^{-N+1}.
\end{equation}
In other words, $\sqrt{a_N^*}\le 3^N|h|\le  3 \sqrt{a_{N-1}^*}$.
We choose this $N$ so that
$$
    3^N|h|<  3 \sqrt{a_{N-1}^*} \to 0, \quad  a_N^* |3^N h|^{-1} \le \sqrt{a_N^*} \to 0.
$$
Thus we have proved the equality
$$
    \lim_{h\to 0}  \frac{F(x+h) - F(x)}{h}
       = \lim_{N\to \infty }\sum_{n=0}^N a_n f'(3^n x)
$$
where we understand that one limit exists iff the other limit exists. Notice that for any small $h$
we have determined an $N$ verifying (\ref{DD}). We should point out that for any $N$, we can find $h$
verifying (\ref{DD}).
\end{proof}

Now we investigate the differentiability of $F$ by studying the size of the set of differentiable
points of $F$, which is denoted by
$$
     D(F) = \{x\in \mathbb{T}: F'(x)\ \ \mbox{\rm exists}\}.
$$
The set of singular points of $F$ will be denoted by $S(F) = \mathbb{T}\setminus D(F)$.

The following theorem says that a class of functions $F$ which is defined below  is divided into two subclasses according
to $\sum_{n=0}^\infty |a_n|^2 <\infty$ or $=\infty$. Concerning the differentiability,  we will observe a "phase transition" from one subclass
to another. The proof  will be based not only our Theorem \ref{main}  but also on a
 result due to Fan and Schmeling \cite{FS} which states that there is a big set on which
Birkhoff sums are bounded (the Birkhoff ergodic theorem admits only an order $o(n)$). The proof also uses an argument of thermodynamical formalism. We refer to \cite{FS} for the thermodynamical formalism.

\begin{theorem} \label{Thm-diff} Let $f\in C^{1+\delta}(\mathbb{T})$ for some $\delta >0$ and that $\{f'(3^nx)\}_{n\ge 0}$ is a Riesz system in $L^2(\mathbb{T})$.
Let $(a_n)$ be a sequence of numbers such that $$
\lim_{n\to\infty} a_n =0,\quad
\sum_{n=0}^\infty  a_n \ \mbox{\rm diverges}, \quad
\sum_{n=0}^\infty |a_n -a_{n+1}|<\infty.$$
 Consider the continuous function $F$
defined by (\ref{F}). We have the following dichotomy:\\
\indent {\rm (a)} \  If $\sum_{n=0}^\infty |a_n|^2 =\infty$, then
$S(F)$ has full Lebesgue measure while the Hausdorff dimension of $D(F)$ is equal to $1$;\\
 \indent {\rm (b)} \  If $\sum_{n=0}^\infty |a_n|^2 <\infty$, then
$D(F)$ has full Lebesgue measure while the Hausdorff dimension of $S(F)$ is equal to $1$.
\end{theorem}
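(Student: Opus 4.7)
The plan is to translate the dichotomy into a statement about the ergodic series
$G(x) := \sum_{n\ge 0} a_n f'(3^n x)$
and then combine Theorem~\ref{main} with a multifractal analysis of the Birkhoff sums of $f'$ for the expanding map $Tx = 3x \bmod 1$. By Proposition~\ref{Diff-Cov}, $D(F) = \{x : G(x) \text{ converges}\}$ and $S(F) = \{x : G(x) \text{ diverges}\}$, so the whole theorem becomes a statement about $G$.

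For the full-Lebesgue-measure halves: the derivative $f' \in C^\delta(\mathbb{T})$ is Dini continuous with $\int f'\, d\lambda = 0$ by periodicity of $f$, so the Ruelle-operator estimate of Section~4.3 gives both hypotheses (H1) and (H2) for $(T,\lambda,f')$. Together with the Riesz-system assumption on $\{f'(3^n\cdot)\}$, Theorem~\ref{main} yields: in case (a) the series $G$ diverges Lebesgue-a.e., so $S(F)$ has full Lebesgue measure; in case (b) it converges Lebesgue-a.e., so $D(F)$ has full Lebesgue measure.

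The Hausdorff-dimension assertions rest on the Abel summation identity
\begin{equation*}
\sum_{n=0}^N a_n f'(3^n x) \;=\; a_N\, S_{N+1}(x) \;+\; \sum_{m=1}^N (a_{m-1}-a_m)\, S_m(x), \qquad S_k(x) := \sum_{n=0}^{k-1} f'(3^n x).
\end{equation*}
For case (a), the Fan--Schmeling result \cite{FS} furnishes a set $E\subset\mathbb{T}$ with $\dim_H E = 1$ on which $\sup_n |S_n(x)| < \infty$; combined with $a_N\to 0$ and $\sum|a_m - a_{m+1}|<\infty$, the right-hand side converges absolutely on $E$, so $E \subset D(F)$ and $\dim_H D(F)=1$. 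For case (b), I would employ thermodynamic formalism: the pressure function $t\mapsto P(tf')$ for the expanding map $T$ is analytic and strictly convex, producing a one-parameter family of Gibbs equilibrium states $\mu_\alpha$ with $\int f'\, d\mu_\alpha = \alpha$ and with $\dim\mu_\alpha = h(\mu_\alpha)/\log 3$ continuous at $\alpha = 0$ with value $1$. On a $\mu_\alpha$-generic set $R_\alpha$ of Hausdorff dimension $\dim\mu_\alpha$, the law of the iterated logarithm gives $S_n = \alpha n + O(\sqrt{n\log\log n})$; substituting into the Abel identity, the main term becomes $\alpha\sum_{n\le N} a_n \to \pm\infty$ while the remainder is absorbed by the bounded-variation and $\ell^2$ hypotheses on $(a_n)$. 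Hence $R_\alpha \subset S(F)$, and sending $\alpha_k\to 0$ with $\dim\mu_{\alpha_k}\to 1$ and taking $\bigcup_k R_{\alpha_k}\subset S(F)$ yields $\dim_H S(F) = 1$.

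The principal obstacle will be case (b). Two technical points must be settled: constructing the family $\{\mu_\alpha\}$ and verifying the continuity $\dim\mu_\alpha \to 1$ as $\alpha\to 0$ (both standard via the variational principle for H\"older potentials on expanding maps), and, more delicately, showing that the Abel-expansion error
$a_N\sqrt{N\log\log N} + \sum_{m\le N}|a_{m-1}-a_m|\sqrt{m\log\log m}$
is dominated by the main term $\alpha\sum_{n\le N} a_n$. The latter requires exploiting the joint content of the three hypotheses on $(a_n)$---bounded variation, $\ell^2$ decay, and divergent sum---which together enforce essentially power-law regularity on $(a_n)$ and thereby ensure the error is $o(\sum_{n\le N} a_n)$.
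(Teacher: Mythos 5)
Your treatment of the full-Lebesgue-measure assertions and of the dimension claim in case (a) coincides with the paper's: Theorem~\ref{main} applied to $f'$ with respect to Lebesgue measure, plus the Fan--Schmeling set of bounded Birkhoff sums combined with Abel summation and the bounded-variation hypothesis. The skeleton of your case (b) is also the paper's (the Gibbs family $\mu_t$ for the potentials $tf'$, the identity $\int f'\,d\mu_t = P'(t)\neq 0$ for $t\neq 0$ via strict convexity, and $\dim\mu_t\to 1$). The divergence mechanism on the $\mu_t$-typical set, however, is where you depart from the paper and where your argument has a genuine gap.

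You propose to substitute the LIL bound $S_m = \alpha m + O(\sqrt{m\log\log m})$ into the Abel identity and then to show that the error
$a_N\sqrt{N\log\log N}+\sum_{m\le N}|a_{m-1}-a_m|\sqrt{m\log\log m}$
is $o\bigl(\bigl|\sum_{n\le N}a_n\bigr|\bigr)$. You flag this yourself as the principal obstacle, and your proposed justification --- that bounded variation, $\ell^2$ summability and divergence of $\sum a_n$ ``enforce essentially power-law regularity'' --- is not a proof and is false as stated (block-constant sequences such as $a_n=b_k$ on $[N_k,N_{k+1})$ satisfy all three hypotheses without any power-law behaviour). Since $\sum a_n$ may diverge arbitrarily slowly while $\sum_m|a_{m-1}-a_m|\sqrt{m\log\log m}$ is not controlled by the stated hypotheses, the domination does not follow, and the step fails as written. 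The paper avoids this entirely: observe that your ``error term'' is exactly the partial sum of the centered series $\sum_n a_n\,[f'(3^nx)-m_t]$, where $m_t=\int f'\,d\mu_t$. Applying Theorem~\ref{main} to the function $f'-m_t$ \emph{with respect to the Gibbs measure $\mu_t$} (whose hypotheses (H1)--(H2) you must verify for $\mu_t$, not just for Lebesgue measure --- this is what Section~4 supplies for H\"older potentials and H\"older $f'$) shows that this series converges $\mu_t$-a.e.; since $m_t\sum a_n$ has non-convergent partial sums, the full series diverges $\mu_t$-a.e., with no quantitative comparison and no LIL needed. Replacing your error estimate by this appeal to Theorem~\ref{main} closes the gap and also removes the extraneous almost-sure-invariance-principle input.
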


\begin{proof} First observe that $\int_0^1 f'(x) d x = f(1) -f(0) =0$ for $f$ is periodic.

Suppose $\sum_{n=0}^\infty |a_n|^2 =\infty$. By Theorem \ref{main}, $\{f'(3^nx)\}_{n\ge 0}$ is an exact convergence system and the series $\sum_{n=0}^\infty a_n f'(3^n x)$ diverges almost everywhere. This, together with Proposition \ref{Diff-Cov}, proves that $S(F)$ is of full
Lebesgue measure. Recall that the Lebesgue measure is a Gibbs measure relative to the dynamics
$x\mapsto 3x \mod 1$ and that $\int f'(x) d x=0$. By a theorem of Fan-Schmeling (\cite{FS}, Theorem 3.1), the points $x$ such that
$$
     \sum_{k=0}^n f'(3^k x) = O(1)
$$
is of Hausdorff dimension $1$. For every such a point $x$, the series $\sum_{n=0}^\infty a_n f'(3^n x)$
converges. This is checked by the bounded variation hypothesis  $\sum_{n=0}^\infty |a_n -a_{n+1}|<\infty$
and by making an Abel summation by parts. Thus we have proved $\dim_H D(F)=1$, thanks to Proposition \ref{Diff-Cov}.

Suppose $\sum_{n=0}^\infty |a_n|^2 <\infty$. By Theorem \ref{main} and Proposition \ref{Diff-Cov}, we get immediately that $D(F)$ is of full Lebesgue measure. Since $\{f'(3^n x)\}$ is an exact convergence system, $f'$ is not a coboundary. That is to say,
$f'$ can not be written as $u(x) - u(3x)$ for some function $u$. Otherwise $\sum d_n f'(3^n x)$ converges for any sequence $d_n$ decreasing to zero.
Let us consider the Gibbs measure
$\mu_t$ associated to the potential $t f'$ where $t\in \mathbb{R}$ is a parameter.
The measure $\mu_0$ is nothing but the Lebesgue measure.  Let P(t)
be the pressure associated to $tf'$. It is well known that $P$ is a strictly convex and analytic
function on $\mathbb{R}$. Furthermore,
$$
      \int f'(x) d\mu_t(x) = P'(t).
$$
The strict convexity  implies that the mean value $m_t:=\int f' d\mu_t \not =0$ for $t\not= 0$. Let us consider the following series, which is centralized according to $\mu_t$:
$$
   \sum_{n=0}^\infty a_n [f'(3^n x) - m_t].
$$
By Theorem \ref{main}, this series converges $\mu_t$-almost everywhere. As $\sum_{n=0}^\infty a_n$
diverges
and $m_t\not= 0$, the series $\sum_{n=0}^\infty a_n f'(3^n x)$ diverges $\mu_t$-almost everywhere. Thus
$$
    \dim_H S(F) \ge \dim \mu_t,
$$
thanks again to Proposition \ref{Diff-Cov} (see \cite{Fan1994} for the notion of dimension of a measure).
However it is known that $\lim_{t\to 0} \dim \mu_t=1$, so that we can conclude $\dim_H S(F)=1$.
\end{proof}

Remark that the condition that $\sum a_n$ diverges is not needed for the part (a), and the condition $\sum |a_n-a_{n+1}|<\infty$ is not needed for the part (b).

\medskip

Let us finish by examining a concrete example:
$$
   F_\alpha (x) = \sum_{n=1}^\infty n^{-\alpha}3^{-n} e^{2\pi i 3^n x} \qquad (\alpha \in \mathbb{R}).
$$
In the following table, $\lambda$ denote the Lebesgue measure. We have four situations
according to four regions of $\alpha \in \mathbb{R}$:
\medskip

\centerline{
\begin{tabular}{|c|l|}
  \hline
  $\ \ \ \ \ \ \alpha \le 0 $           & $F_\alpha$ is nowhere differentiable  \\
  $0<\alpha\le \frac{1}{2}$ & $\dim D(F_\alpha)=1$, \ \   $\lambda(S(F_\alpha))=1$ \\
  $\frac{1}{2}<\alpha\le 1$ & $\ \, \lambda(D(F_\alpha))=1$, \ \  \ $\dim (F_\alpha)=1$ \\
  $\ \ \ \ \ \ \alpha >1 $              &  $F_\alpha$ is everywhere differentiable \\
  \hline
\end{tabular}
}
\medskip

The classical Weierstrass function $W_\tau(x)=\sum_{n=1}^\infty 3^{- \tau n} \cos (2\pi 3^n x)$ with $0<\tau<1$
is nowhere differentiable. It is recently proved by Bara\'nski, B\'ar\'any and Romanowska \cite{BBR} that the Hausdorff dimension of the
graph of $W_\tau$ is equal to $2-\tau$ for $\tau$ in some interval $(0, \tau_0)$. This is an important step toward the conjecture made by B. Mandelbrot. See \cite{BBR} for information on the subject and for
other discussions on functions of the form $\sum_{n=1}^\infty a_n f(3^nx)$.




\begin{thebibliography}{99}



\bibitem{Aistleitner2013} C. Aistleitner,
{\em Metric number theory, lacunary series and systems of dilated functions}, 2013.
arXiv: 1306.3315v2.

\bibitem{Alexits} G. Alexits, {\em Convergence problems of orthogonal series}, Pergamon Press, 1961.

\bibitem{BW} I. Berkes and M. Weber,
{\em On the convergence of $\sum c_k f(n_k x)$},
Memoirs of the American Mathematical Society, Vol. 201 No. 943,
American Mathematical Society, 2009.

\bibitem{BBR}
{ K. Bara\'nski, B. B\'ar\'any  and J. Romanowska}, {\em Hausdorff dimension of the graph of Weierstrass functions},
preprint, 2013.

\bibitem{Bowen}
{ R. Bowen}, {\em Equilibrium states and the ergodic theory of Anosov
diffeomorphisms}.
LNM {\bf 470}, Springer, Berlin, 1975.

\bibitem{Carleson}
L. Carleson, {\em On convergence and growth of partial sums of Fourier series}, Acta Math. 116 (1966) 135-157.

\bibitem{CL} G. Cohen and M. Lin,
{\em Almost sure convergence of weighted sums of independent random variables}, Ergodic theory, 13-43,
Contemp. Math., 485, Amer. Math. Soc., Providence, RI, 2009.

\bibitem{Cotlar}
{ M. Cotlar}, {\em A unified theory of Hilbert transforms and ergodic theorems}, Rev. Mat.
Cuyana 1 (1955), 105-67.



\bibitem{Daubechies}
{ I. Daubechies}, {\em Tex lectures on wavelets},
 CBMS-NSF Regional Conference Series in Applied Mathematics, 1992.

\bibitem{Fan_Riesz}
{ A. H. Fan}, {\em Quelques propri\'et\'e  des produits de Riesz},
Bull. Sci. Math., 117 (1993), no. 4, 421¨C439.




\bibitem{Fan1994} A.H. Fan, {\em Sur les dimensions de mesures}, Studia Math., 111 (1994), 1-17.

\bibitem{FJ1} A. H. Fan and Y. P. Jiang, {\em
    On Ruelle-Perron-Frobenius operators I },
          Commun.\ Math.\ Phys., Vol.~223, Issue 1, (2001)  125--141.

\bibitem{FJ2}  A. H. Fan and Y. P. Jiang,    {\em On Ruelle-Perron-Frobenius operators II },
         Commun.\ Math.\ Phys.,  Vol.~223, Issue 1, (2001)
         143--159.

\bibitem{FS} A. H. Fan and J. Schmeling, {\em On fast Birkhoff averaging},  Math.\ Proc.\ Camb.\ Phil.\ Soc., 135 (2003),
443-467.

\bibitem{Gaposhkin} V.F. Gaposhkin, {\em
Lacunary series and independent functions}, (Russian) Uspehi Mat. Nauk 21 (1966),
3-82.

\bibitem{GM}
{ K. Gr\"{o}chenig and W. Madych},
    {\em Multiresolution analysis, Haar basis and self-similar tilings},
         IEEE Trans. Inform. Theory,   38 (2) (1992), 558-568.

\bibitem{HLS} H. Hedenmalm, P. Lindqvist and A. Seip, {\em A Hilbert space of Dirichlet series and systems of dilated functions in $L^2(0, 1)$}, Duke Math. J., Vol. 86, No. 1 (1997), 1-37.


\bibitem{JOP} B. Jamison, S. Orey and W. Pruitt, {\em Convergence of weighted averages of independent random wariables}, Z Wahrscheinlichkeitstheorie und Verw. Gebiete  4 (1965), 40-45.


\bibitem{KSZ} M. Kac, R. Salem and A. Zygmund, {\em A gap theorem},
Trans. AMS,
Vol. 63, No. 2 (1948), 235-243.

\bibitem{K} A. G. Kachurovskii,
{\em The rate of convergence in ergodic Theorems}, Russian Math. Surveys 51: 4 (1996), 653-703.

\bibitem{Kahane1985}
J. P. Kahane, {\em Some random series of functions}, Second edition. Cambridge Studies in Advanced Mathematics, 5. Cambridge University Press, Cambridge, 1985.



\bibitem{KP1981}
S. Kakutani and K. Petersen,
{\em The speed of convergence in the ergodic theorem}, Monatsh. Math. 91 (1981),
11-18.

\bibitem{Katznelson}  Y. Katznelson, {\em An introduction to harmonic analysis},
John Wiley \& Sons, 1968.



\bibitem{Krengel} U. Krengel, {\em Ergodic theorems},
Walter de Gruyter, Berlin, New York, 1982.

\bibitem{LW}
{ J.C. Lagarias and Y. Wang}, {\em  Integral self-affine tiles in
     ${\Bbb R}^n$ II}, J. Fourier Anal. Appl.,   Vol 3 (1) (1997).


\bibitem{Moricz1976}
F. Moricz, {\em Moment inequalities and the strong law of large numbers}, Z Wahrschein- lichkeitstheorie und Verw. Gebiete 35 (1976), 299-314.

\bibitem{PP}
W. Parry and M. Pollicott, {\em Zeta functions and the periodic orbit structure of hyperbolic dynamics},
Asterisque 187-188.

\bibitem{P2010}
A. Paszkiewicz, {\em
A complete characterization of coefficients
of a.e. convergent orthogonal series and majorizing
measures}, Invent math 180 (2010),  55-110.


\bibitem{PetersenBook}
K. Petersen,
{\em Ergodic Theory}, Cambridge University Press, 1983.


\bibitem{Peyriere}
 J. Peyri\`ere, {\em Almost everywhere convergence of lacunary trigonometric series with respect to Riesz products},  J. Austral. Math. Soc. Ser. A 48 (1990), no. 3, 376¨C383.



\bibitem{PS} G. Polya and G. Szeg\"{o}, {\em
Problems and Theorems in Analysis II}, Springer, 1976.

\bibitem{R1}
{ D. Ruelle}, {\em Statistical mechanics of a one-dimensional lattice gas},
Commun. in Math. Phys., Vol. {\bf 9} (1968), 267-278

\bibitem{R2}
{ D. Ruelle}, {\em  A measure associated with Axiom A attractors},
 Amer. J. Math., Vol. {\bf 98} (1976), 619-654.

\bibitem{R3}
 D. Ruelle, {\em Thermodynamic Formalism},  Addison-Wesley, 1978.

\bibitem{Sarig}
O. Sarig, {\em Subexponential decay of correlations}, Invent. Math. 150 (2002), 629-653.


\bibitem{Shiryayev} A. N. Shiryayev, {\em Probability}, GTM 95, Springer,
1995.

\bibitem{Walters} P. Walters, {\em An introduction to ergodic theory},
GTM 79, Springer, 1982.

\bibitem{Weber} M. Weber, {\em Entropie m\'etrique et convergence presque partout},
Travaux en cours, Hermann, 1998.

\bibitem{Young1998}
 L. S. Young, {\em Statistical properties of dynamical systems with some hyperbolicity}, Ann. of Math. (2) 147 (1998), no. 3, 585-650.
\bibitem{Young1999} L. S. Young, {\em
Recurrence times and rates of mixing}, Israel J. Math. 110 (1999), 153¨C188.

\end{thebibliography}
\end{document}